\documentclass{amsproc}
\usepackage{multirow}
\usepackage{array}

\usepackage{caption}
\usepackage{float}

\usepackage{xargs}
\usepackage{mathtools, todonotes}
\usepackage{amsmath,amssymb,amsfonts,amsthm,mathdots}

\usepackage[
  a4paper,
  left=27mm,
  right=27mm,
  top=27mm,
  bottom=30mm
]{geometry}

\usepackage[colorlinks,backref]{hyperref}
\usepackage{amsrefs}
\usepackage{enumerate}

\usepackage[onehalfspacing]{setspace}

\newcommandx{\set}[2][2=\empty]{\left\{#1\ifx#2\empty\else\,\middle\vert\,#2\fi\right\}}
\newcommandx{\tuple}[2][2=\empty]{(#1\ifx#2\empty\else\,|\,#2\fi)}
\newcommandx{\gensubgrp}[2][2=\empty]{\langle#1\ifx#2\empty\else\,|\,#2\fi\rangle}
\newcommandx{\gennorsubgrp}[2][2=\empty]{\langle\!\langle#1\ifx#2\empty\else\,|\,#2\fi\rangle\!\rangle}
\newcommandx{\gensubsp}[2][2=\empty]{\langle#1\ifx#2\empty\else\,|\,#2\fi\rangle}
\newcommand{\norm}[1]{\left\lvert#1\right\rvert}
\newcommand{\card}[1]{\left\lvert#1\right\rvert}
\DeclareMathOperator{\GL}{GL}
\DeclareMathOperator{\SL}{SL}
\DeclareMathOperator{\PSL}{PSL}

\DeclareMathOperator{\diag}{diag}

\DeclareMathOperator{\Sp}{Sp}
\DeclareMathOperator{\PSp}{PSp}
\DeclareMathOperator{\GO}{GO}

\DeclareMathOperator{\SO}{SO}

\DeclareMathOperator{\im}{im}

\title{Images of word maps with constants on algebraic groups}
\author{Vadim Alekseev}
\address{Vadim Alekseev, TU Dresden, 01062 Dresden, Germany}
\email{vadim.alekseev@tu-dresden.de}
\author{Jakob Schneider}
\address{Jakob Schneider, TU Dresden, 01062 Dresden, Germany}
\email{jakob.schneider@tu-dresden.de}

\subjclass[2020]{20F70, 20G15}

\theoremstyle{plain}
\newtheorem{theorem}{Theorem}
\newtheorem{lemma}[theorem]{Lemma}

\newtheorem{corollary}[theorem]{Corollary}

\newtheorem{fact}[theorem]{Fact}

\theoremstyle{definition}
\newtheorem{remark}[theorem]{Remark}
\newtheorem{definition}[theorem]{Definition}
\newtheorem{example}[theorem]{Example}

\begin{document}
\begin{abstract}
We study word maps with constants on quasisimple algebraic groups over a local field $L$. We prove that, for such a group $G=\mathbf{G}(L)$ and a word $w\in (G\ast\mathbf{F}_r)\setminus G$, either $w$ has a so-called Tomanov-small critical constant or the minimal dimension of the word image $w(G^r)\subseteq G$ of such a word is bounded from below by a function $c(G)>1$. We compute the optimal value of $c(G)$ for most quasisimple linear algebraic groups.
\end{abstract}
\maketitle

\vspace{6mm}

\section{Introduction and preliminaries}

The study of \emph{group laws} and \emph{word maps} has a long history; see, for example, Neumann's monograph~\cite{neumann1967varieties} and Borel's classical dominance theorem~\cite{borel1983free}, as well as the references in~\cite{gordeevkunyavskiiplotkin2018word}. Quantitative largeness of the image of a non-trivial word map was subsequently studied by Larsen~\cite{larsen2004word}. Related quantitative results on laws and ordinary word maps in finite groups were obtained by Bradford--Thom~\cites{bradfordthom2019short,bradfordthom2026short} and Schneider--Thom~\cite{schneiderthom2021word}. In this article we study similar questions for words with constants: Let $G\leq C$ be algebraic groups. Fix a word 
$$
w=w(x_1,\ldots,x_r)=c_0 x_{i(1)}^{\varepsilon(1)}c_1\cdots c_{l-1} x_{i(l)}^{\varepsilon(l)}c_l\in C\ast\mathbf{F}_r=C\ast\gensubgrp{x_1,\ldots,x_r}
$$ 
with constants from $C$. Then we obtain the \emph{word map} $w\colon G^r\to C$ by substitution $(g_1,\ldots,g_r)\mapsto w(g_1,\ldots,g_r)$. We call $w$ a \emph{mixed identity} for $G$ with constants in $C$ if $w(G^r)=\mathbf{1}$, i.e.\ if the word map is trivial. Mixed identities in classical groups and quasisimple linear algebraic groups were studied by Golubchik--Mikhalev~\cite{golubchikmikhalev1982generalized}, Tomanov~\cite{tomanov1985generalized}, and Gordeev~\cite{gordeev1997freedom}. More recent quantitative work on word maps with constants and mixed identities includes Schneider--Thom~\cite{schneiderthom2024constants} and Bradford--Schneider--Thom~\cites{bradfordschneiderthom2024nonsolutions,bradfordschneiderthom2025mixed,bradfordschneiderthom2023nonsingular}. In this article we make the results about absence of mixed identities quantitative by providing lower bounds on the \emph{dimension} (as a \emph{constructible set}, see Definition~\ref{def:dim_img}) of the word image $w(G^r)\subseteq C$ for words $w\in(C\ast\mathbf{F}_r)\setminus C$ which are not mixed identities and admit no so-called \emph{Tomanov-small} constants (see Definition~\ref{def:sml_consts} below).

\medskip

Let $w$ be \emph{reduced} as above, i.e. $i(j)=i(j+1)$ and $\varepsilon(j)=-\varepsilon(j+1)$ implies that $c_j\neq 1_C$. Call $c_1,\ldots,c_{l-1}$ the \emph{intermediate constants}. Define the following sets of indices:
\begin{align*}
J_0(w)&\coloneqq\set{j\in\set{1,\ldots,l-1}}[ i(j)\neq i(j+1)];\\ 
J_+(w)&\coloneqq\set{j\in\set{1,\ldots,l-1}}[ i(j)= i(j+1)\text{ and }\varepsilon(j)=\varepsilon(j+1)];\\
J_-(w)&\coloneqq\set{j\in\set{1,\ldots,l-1}}[ i(j)= i(j+1)\text{ and }\varepsilon(j)=-\varepsilon(j+1)].
\end{align*}
Call the elements of $J_-(w)$ \emph{critical indices} and the constants $c_j$ for $j\in J_-(w)$ \emph{critical constants}. The latter are collected in the set $I(w)\coloneqq\set{c_j}[j\in J_-(w)]$. Let the `small' subset $\Lambda\subseteq C$ given. Following \cite{golubchikmikhalev1982generalized}, we call a word $w$ \emph{$\Lambda$-non-central} if no critical constant of $w$ lies in $\Lambda$, i.e.\ $\Lambda\cap I(w)=\varnothing$. E.g.\ $w$ is $\mathbf{1}$-non-central if and only if it is reduced, which we assume in the following. Moreover, $w$ is $C$-non-central if and only if it has no critical constants. In this latter case, we call $w$ \emph{strict}. In the course of this article, we will need the following lemma, which is just a version of \cite{bradfordschneiderthom2025length}*{Lemma~2.2} but formulated for an infinite group. We leave its proof to the reader. This justifies that we only need to consider the case $r=1$, i.e.\ $w\in C\ast\gensubgrp{x}=C\ast\mathbb{Z}$.
	
\begin{lemma}\label{lem:red_one_var_cs}
Assume that $G$ is infinite and let $w\in C\ast\mathbf{F}_r=C\ast\gensubgrp{x_1,\ldots,x_r}$ be as above. Then there is a substitution $s\colon x_i\mapsto g_{-i} x g_i$ for $g_{\pm i}\in G$ ($i\in\set{1,\ldots,r}$) such that in 
$$
w'(x)\coloneqq w(s(x_1),\ldots,s(x_r))=c_0' x^{\varepsilon(1)} c_1' \cdots c_{l-1}' x^{\varepsilon(l)} c_l'\in C\ast\gensubgrp{x}
$$ 
we have $c_j'\neq 1_C$ for $j\in\set{1,\ldots,l-1}$. In this case, clearly $\im(w')\subseteq\im(w)$, so if $w$ is a mixed identity, we may assume that it has only on variable.
\end{lemma}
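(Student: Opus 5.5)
Substituting $x_i\mapsto g_{-i}xg_i$ into $w$ gives the one-variable word with constants
$$
c_0'=c_0g_{-i(1)}^{(\varepsilon(1))},\qquad c_j'=g_{i(j)}^{(\varepsilon(j))}\,c_j\,g_{-i(j+1)}^{(\varepsilon(j+1))}\quad(1\le j\le l-1),
$$
where $g_{\pm i}^{(\varepsilon)}$ denotes the appropriate factor. Concretely, $x_i=g_{-i}xg_i$ and $x_i^{-1}=g_i^{-1}x^{-1}g_{-i}^{-1}$. So the letter $x_{i(j)}^{\varepsilon(j)}$ contributes a left factor $a_j$ and a right factor $b_j$:
\begin{itemize}
\item if $\varepsilon(j)=1$, then $(a_j,b_j)=(g_{-i(j)},g_{i(j)})$;
\item if $\varepsilon(j)=-1$, then $(a_j,b_j)=(g_{i(j)}^{-1},g_{-i(j)}^{-1})$.
\end{itemize}
Hence $c_j'=b_jc_ja_{j+1}$. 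The inclusion $\im(w')\subseteq\im(w)$ is immediate, because $w'(g)=w(g_{-1}gg_1,\dots,g_{-r}gg_r)$. The plan is therefore to choose the $2r$ elements $g_{\pm i}\in G$ so that each of the finitely many conditions $b_jc_ja_{j+1}\neq 1_C$ holds. I treat each index $j$ according to whether it lies in $J_-(w)$, $J_+(w)$ or $J_0(w)$, and show that the set of "bad" tuples is a proper subset of a kind that cannot cover $G^{2r}$.

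\emph{Critical indices $j\in J_-(w)$.} Here $i(j)=i(j+1)=i$ and $\varepsilon(j+1)=-\varepsilon(j)$. If $\varepsilon(j)=1$, then $c_j'=g_ic_jg_i^{-1}$. If $\varepsilon(j)=-1$, then $c_j'=g_{-i}^{-1}c_jg_{-i}$. In either case $c_j'$ is conjugate to $c_j$, and $c_j\neq1_C$ because $w$ is reduced. So the condition holds automatically, whatever the $g$'s are.

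\emph{Indices $j\in J_+(w)$.} Here $c_j'$ has the form $g_ic_jg_{-i}$ (for $\varepsilon=1$) or $g_{-i}^{-1}c_jg_i^{-1}$ (for $\varepsilon=-1$). The condition fails exactly when $g_{-i}=c_j^{-1}g_i^{-1}$, respectively $g_i=c_j^{-1}g_{-i}^{-1}$ up to inversion. This single equation pins down one of the two independent variables $g_{\pm i}$ in terms of the other.

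\emph{Indices $j\in J_0(w)$.} The variables occurring in $c_j'$ are one of $g_{\pm i(j)}$ and one of $g_{\pm i(j+1)}$, where $i(j)\neq i(j+1)$. Again the condition fails only when one variable is determined by the other: $a_{j+1}=c_j^{-1}b_j^{-1}$.

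Thus each bad set has the form $B_j=\{(g_{\pm i})\in G^{2r}: u=\varphi_j(v)\}$. Here $u,v$ are two distinct coordinates among the $2r$, and $\varphi_j$ is a fixed bijection of $C$, of the form $y\mapsto d\,y^{\pm1}$ or similar. This is the step I expect to be the main obstacle: we are not assuming any topology or Zariski structure on $G$, only that $G$ is infinite. So I will argue purely set-theoretically, as in the finite-group version \cite{bradfordschneiderthom2025length}*{Lemma~2.2}, but replacing counting by a greedy choice.

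The greedy choice goes as follows. Order the $2r$ coordinates arbitrarily and pick them one at a time. When we come to choose a coordinate $u$, look at every bad condition $B_j$ in which $u$ is the later-chosen of its two coordinates. The partner $v$ has already been fixed, so each such $B_j$ forbids at most one value of $u$, namely $\varphi_j(v)$, intersected with $G$. There are at most $l-1$ such conditions, so at most $l-1$ values are forbidden. Since $G$ is infinite, an admissible value for $u$ exists.

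After all $2r$ coordinates have been chosen, every condition $c_j'\neq1_C$ holds:
\begin{itemize}
\item for $j\in J_-(w)$ automatically;
\item for $j\in J_+(w)\cup J_0(w)$ by construction.
\end{itemize}
This gives the required substitution $s$. The final remark of the lemma then follows: if $w$ is a mixed identity, then $\im(w')\subseteq\im(w)=\mathbf{1}$, so $w'$ is a one-variable mixed identity. Moreover, $w'$ is reduced, and indeed has all intermediate constants nontrivial, so it is nontrivial in $C\ast\gensubgrp{x}$ as soon as $l\ge1$.
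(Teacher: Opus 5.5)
Your proof is correct. The paper gives no argument of its own: it leaves this lemma to the reader and points only to the finite-group version it cites. Your proof supplies the omitted argument, and both observations it rests on are right.

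\begin{itemize}
\item For $j\in J_-(w)$ the new constant $c_j'$ is a conjugate of $c_j$, and $c_j\neq 1_C$ by reducedness.
\item For $j\in J_+(w)\cup J_0(w)$ the bad set is the graph of a bijection of $C$ between two \emph{distinct} coordinates among $g_{\pm1},\ldots,g_{\pm r}$. So it excludes at most one value of whichever coordinate is chosen later.
\end{itemize}

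Choosing the coordinates one at a time is the right replacement for counting bad tuples. That count tells you nothing when $G$ is infinite, since each bad set already has cardinality $\card{G}^{2r}$. The greedy argument also needs no algebraic structure on $G$, unlike the Zariski-density argument the paper uses later, in the proof of Theorem~\ref{thm:sln_w_dim1}, for the stronger $g$-appropriateness conditions. So it proves the lemma as stated, for an arbitrary infinite group.

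For the record, when $j\in J_+(w)$ and $\varepsilon(j)=-1$, the failure condition is $g_{-i}g_i=c_j$, i.e.\ $g_i=g_{-i}^{-1}c_j$. Only its shape matters for your argument.
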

	
	
Fix an infinite field $K$ and a perfect linear algebraic $K$-group $\mathbf{G}\leq\GL_n$ acting absolutely irreducible on the vector space $\mathbf{V}\coloneqq\mathbf{A}^n$. Denote by $\mathbf{C}\leq\GL_n$ a reductive algebraic \emph{group of constants} containing $\mathbf{G}$. 
 Fix an algebraically closed overfield $F\geq K$. Let $C\coloneqq\mathbf{C}(F)$.
	
\begin{lemma}\label{lem:im_wrd_mp_constbl}
The word image $w(\mathbf{G}(F)^r)\subseteq\GL_n(F)$ is \emph{constructible} (i.e.\ a boolean combination of Zariski closed sets) and irreducible (i.e.\ it cannot be written as a proper union of two Zariski closed subsets of it).
\end{lemma}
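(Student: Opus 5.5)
The plan is to read the word map as a morphism of affine varieties over $F$ and then apply Chevalley's theorem together with the fact that images of irreducible varieties are irreducible. Write $G\coloneqq\mathbf{G}(F)$. Since $\mathbf{G}$ is a perfect linear algebraic group, it is connected: a perfect group has no nontrivial abelian quotients, in particular no nontrivial finite cyclic ones, and hence $\mathbf{G}=\mathbf{G}^\circ$ because $\mathbf{G}/\mathbf{G}^\circ$ would otherwise be a nontrivial finite perfect-group quotient. If one prefers not to rely on this, I would simply add connectedness as a standing hypothesis, as is implicit in the later setting of quasisimple groups. So $G$ is an irreducible affine variety over the algebraically closed field $F$, and $G^r$ is irreducible, being a finite product of irreducible varieties over an algebraically closed field.

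Fix $w=c_0x_{i(1)}^{\varepsilon(1)}c_1\cdots x_{i(l)}^{\varepsilon(l)}c_l$ with $c_j\in C\subseteq\GL_n(F)$. The map $w\colon G^r\to\GL_n(F)$ is a composite of three kinds of morphisms:
\begin{itemize}
\item the projections $G^r\to G$;
\item inversion on $\GL_n$, which is a morphism since $g^{-1}=\det(g)^{-1}\operatorname{adj}(g)$;
\item left and right multiplication by the fixed matrices $c_j$, together with matrix multiplication $\GL_n\times\GL_n\to\GL_n$, all of which are polynomial in the coordinates of $\GL_n$.
\end{itemize}
Hence $w$ is a morphism of $F$-varieties $G^r\to\GL_n$.

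By Chevalley's theorem, the image of a morphism of varieties (of finite type over $F$) is constructible, so $w(G^r)$ is constructible in $\GL_n(F)$. For irreducibility, suppose $w(G^r)=A\cup B$ with $A,B$ relatively closed in $w(G^r)$. Then $G^r=w^{-1}(A)\cup w^{-1}(B)$, and both preimages are closed by continuity of $w$ in the Zariski topology. Irreducibility of $G^r$ forces one of them, say $w^{-1}(A)$, to equal $G^r$, whence $A=w(G^r)$. Equivalently, $\overline{w(G^r)}$ is irreducible and $w(G^r)$ contains a dense open subset of its closure.

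The only step requiring care is the connectedness of $\mathbf{G}$, i.e.\ the irreducibility of $G^r$; everything else is formal. If $\mathbf{G}$ were allowed to be disconnected, constructibility would still hold but irreducibility could fail. So I would make explicit that perfectness gives connectedness, via the finite quotient $\mathbf{G}/\mathbf{G}^\circ$ argument above.
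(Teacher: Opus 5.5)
Your core argument is exactly the paper's one-line proof, and those steps are fine (the adjugate formula for inversion, the preimage argument). The argument runs: the word map is a morphism $\mathbf{G}(F)^r\to\GL_n(F)$, Chevalley's theorem gives constructibility, and a continuous image of the irreducible space $\mathbf{G}(F)^r$ is irreducible.

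\textbf{The error: perfect does not imply connected.} You claim that perfectness forces $\mathbf{G}=\mathbf{G}^\circ$, and this is false. Perfectness only tells you that the finite group $\mathbf{G}/\mathbf{G}^\circ$ is perfect, i.e.\ has no nontrivial \emph{solvable} quotients. That is no contradiction, because nontrivial finite perfect groups exist.
\begin{itemize}
\item Take $A_5$ acting on the $4$-dimensional irreducible summand of its permutation module $F^5$, in characteristic $0$. This is a perfect, absolutely irreducible linear algebraic group with $\mathbf{G}^\circ=\mathbf{1}$. For $w(x)=x$ the image is a set of $60$ points, which is reducible.
\item $\SL_2\times A_5$ acting on $F^2\otimes F^4$ gives a positive-dimensional example of the same phenomenon.
\end{itemize}
So the irreducibility half of the statement does not follow from the standing hypotheses (perfect, absolutely irreducible). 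Connectedness has to be imposed as an extra assumption.

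The paper's proof does exactly this tacitly: it uses irreducibility of $\mathbf{G}(F)^r$ without comment. A few lines later it explicitly treats the case that $\mathbf{G}$ is not connected, passing to $x\mapsto x^e$ with $e=\card{\mathbf{G}(F)/\mathbf{G}^\circ(F)}$, and only then stipulates that $\mathbf{G}$ is connected. That discussion would be pointless if perfectness implied connectedness.

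So delete the ``perfect implies connected'' paragraph and keep your fallback of assuming connectedness, as in Definition~\ref{def:qsi_smpl_alg_grp}. With that change your proof is complete and coincides with the paper's.
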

	
\begin{proof}
Indeed, the word map
$$
\GL_n(F)^r\supseteq\mathbf{G}(F)^r\overset{w}{\twoheadrightarrow}w(\mathbf{G}(F)^r)\subseteq\mathbf{M}_n(F),
$$
is regular and Chevalley's theorem applies. Furthermore, images of irreducible sets under a continuous map are again irreducible.
\end{proof}	
	
Then the $K$-points of $\mathbf{G}$ are Zariski dense in $\mathbf{G}(F)$ (which holds by \cite{borel2012linear}*{Corollary~18.3}, since $\mathbf{G}$ is perfect). 
	
\begin{definition}\label{def:dim_img}
For a given subset $T\subseteq\mathbf{M}_n(F)$, we define the dimension $\dim(T)\in\mathbb{N}$ as the \emph{Krull dimension} of the Zariski closure $\overline{T}^F\subseteq\mathbf{M}_n(F)$.
\end{definition}

	
\begin{remark}\label{rmk:incr_fld}
Note here that
\begin{equation}\label{eq:wrd_im_dim}
\overline{w(\mathbf{G}(F)^r)}^F=\overline{w(\mathbf{G}(K)^r)}^F\subseteq\mathbf{M}_n(F),
\end{equation}
since $\mathbf{G}(K)^r$ is Zariski dense in $\mathbf{G}(F)^r$. Hence, replacing $K$ by an overfield, which is contained in $F$, does not change the dimension of $w(\mathbf{G}(K)^r)\subseteq\mathbf{M}_n(F)$. This will be exploited later in the text.
\end{remark}
	
Given a Zariski closed subset $\mathbf \Lambda$ of ``exceptional'' constants (in our case we'll set it to be the so-called \emph{Tomanov-small constants} \cite{tomanov1985generalized}, see Definition~\ref{def:sml_consts} below), we can define the following quantities:
$$
d(\mathbf{G},\mathbf{C},\mathbf{\Lambda})\coloneqq\min\set{\dim(w(\mathbf{G}(K)^r))}[w\in \mathbf{C}(F)\ast\mathbf{F}_r\setminus\mathbf{C}(F),\ \mathbf{\Lambda}(F)\cap I(w)=\varnothing];
$$
$$
d(\mathbf{G})\coloneqq\min\set{\dim(w(\mathbf{G}(K)^r))}[w\in \mathbf{G}(F)\ast\mathbf{F}_r\setminus\mathbf{G}(F)].
$$
Note that $d(\mathbf{G})=d(\mathbf{G},\mathbf{G},\mathbf{1})$. We have that $d(\mathbf{G})=0$ if and only if $\mathbf{G}(K)$ has a mixed identity $w\in\mathbf{G}(F)\ast\mathbf{F}_r$, since $\dim(w(\mathbf{G}(K)^r))=0$ implies (by Equation~\eqref{eq:wrd_im_dim}) that $w(\mathbf{G}(F)^r)$ is finite and connected and hence a singleton $\set{w(1_G,\ldots,1_G)}$, so that $w'\coloneqq ww(1_G,\ldots,1_G)^{-1}$ is a mixed identity for $\mathbf{G}(K)$. If $\mathbf{G}$ is not connected, then $\mathbf{G}(F)/\mathbf{G}^\circ(F)$ is a finite group of order say $e$ and the word map $w''(x)=x^e$ maps $\mathbf{G}(F)$ into $\mathbf{G}^\circ(F)$, so that $w(x_1^e,\ldots,x_r^e)$ maps $\mathbf{G}(F)\supseteq\mathbf{G}(K)$ to a singleton, and we proceed as before. However, in the following we shall assume that $\mathbf{G}$ is connected. In this article, we will compute the exact value of the quantity $d(\mathbf{G})$ for many quasisimple algebraic groups.
	
\section{Mixed identities for algebraic groups}
	
\subsection{Groups with no mixed identity are simple} 
	
At first we have the following structural result. Before we state it, we need a definition.
	
\begin{definition}\label{def:qsi_smpl_alg_grp}
The $K$-group $\mathbf{G}$, as introduced above, is called \emph{quasisimple} if $\mathbf{G}(F)$, for an algebraically closed field $F\geq K$, is non-abelian, connected, and all its Zariski closed connected normal subgroups (over $F$) are trivial. 
\end{definition}	
	
Here comes the promised result:
	
\begin{theorem}\label{thm:red_alm_smpl_cs_alg_grps}
Let $F\geq K$ be an algebraically closed field and $\mathbf{G}$ as above. Then either $\mathbf{G}(F)/\mathbf{Z}(\mathbf{G}(F))$ has a mixed identity of length at most four, or $\mathbf{G}(F)$ is simple (over the field $F$). In short: $d(\mathbf{G}/\mathbf{Z}(\mathbf{G}))=0$ or $\mathbf{G}(F)$ is simple.
\end{theorem}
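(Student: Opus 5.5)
The plan is a structural reduction followed by an explicit commutator word. We first show that $\mathbf{G}$ is semisimple, so that $\mathbf{G}(F)$ is an almost direct product of almost simple factors. We then observe that if there are at least two factors, a commutator word of length four is a mixed identity for $\mathbf{G}(F)/\mathbf{Z}(\mathbf{G}(F))$. If there is only one factor, $\mathbf{G}(F)$ is simple in the sense of Definition~\ref{def:qsi_smpl_alg_grp}.

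\emph{Step 1: reductivity.} Let $U\coloneqq R_u(\mathbf{G}(F))$ be the unipotent radical. By the Lie--Kolchin theorem, the fixed space $\mathbf{V}(F)^U$ is nonzero. Since $U$ is normal in $\mathbf{G}(F)$, this fixed space is $\mathbf{G}(F)$-invariant. Absolute irreducibility then forces $\mathbf{V}(F)^U=\mathbf{V}(F)$. Because $\mathbf{G}\leq\GL_n$ acts faithfully, $U=1$, so $\mathbf{G}(F)$ is reductive.

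\emph{Step 2: semisimplicity.} We are assuming $\mathbf{G}$ is connected. For a connected reductive group we have $\mathbf{G}(F)=Z^\circ\cdot[\mathbf{G}(F),\mathbf{G}(F)]$, where $Z^\circ$ is the central torus, and $\mathbf{G}(F)/[\mathbf{G}(F),\mathbf{G}(F)]$ is a quotient of $Z^\circ$. Since $\mathbf{G}$ is perfect, this quotient is trivial. Moreover $Z^\circ\cap[\mathbf{G}(F),\mathbf{G}(F)]$ is finite, so $Z^\circ$ is a torus isogenous to the trivial group, i.e.\ $Z^\circ=1$. Hence $\mathbf{G}(F)$ is semisimple, and by the standard structure theory
\[
\mathbf{G}(F)=G_1\cdots G_k,
\]
where the $G_i$ are the minimal closed connected normal subgroups. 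These are almost simple, pairwise commute, and have pairwise finite (central) intersections. Moreover $\mathbf{G}(F)/\mathbf{Z}(\mathbf{G}(F))\cong\prod_i \bar G_i$ with each $\bar G_i$ a nontrivial (adjoint) simple group.

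\emph{Step 3: the dichotomy.} If $k=1$, then every closed connected normal subgroup is trivial or all of $\mathbf{G}(F)$. Also $\mathbf{G}(F)$ is non-abelian, since a nontrivial perfect group is non-abelian. So $\mathbf{G}(F)$ is simple in the sense of Definition~\ref{def:qsi_smpl_alg_grp}.

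If $k\geq 2$, write $\bar G\coloneqq\mathbf{G}(F)/\mathbf{Z}(\mathbf{G}(F))=\bar G_1\times\bar G_2\times\cdots$. Choose $1\neq a\in\bar G_1$ and $1\neq b\in\bar G_2$, and set
\[
w(x)\coloneqq a\,x^{-1}bx\,a^{-1}\,x^{-1}b^{-1}x .
\]
This is a commutator $[a,x^{-1}bx]$ and involves the variable four times. For every $x\in\bar G$, the element $x^{-1}bx$ lies in the normal factor $\bar G_2$, which commutes elementwise with $\bar G_1\ni a$. Hence $w(\bar G)=1$. The word is nontrivial in $\bar G\ast\gensubgrp{x}$ because it is reduced in the sense of the introduction. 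Its intermediate constants are $b,a^{-1},b^{-1}$, all $\neq1$, and these sit exactly at the critical indices where $x^{\pm1}$ meets $x^{\mp1}$. So $w$ is a mixed identity of length four, i.e.\ $d(\mathbf{G}/\mathbf{Z}(\mathbf{G}))=0$.

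The main obstacle I expect is Step 2: cleanly extracting semisimplicity and the almost-direct-product decomposition from the hypotheses (perfect, connected, absolutely irreducible). In particular one has to check that the images of the $G_i$ in $\bar G$ really are commuting normal factors, so that the conjugate $x^{-1}bx$ stays inside $\bar G_2$. Once that structure is in place, the word construction is routine.
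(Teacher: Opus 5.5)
Your proof is correct. Its last step coincides with the paper's: your $[a,b^{x}]$ is the word of Fact~\ref{fct:prod_cse_shrt_id} with the roles of the two constants swapped. You reach the needed structure by a genuinely different route, however.

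\emph{The paper's route.} It never shows that the radical vanishes. Instead it treats a nontrivial closed connected abelian normal subgroup as one branch of the dichotomy, handled via Fact~\ref{fct:ab_cse_shrt_id}. It then splits $\mathfrak{g}$ into isotypic components $\mathfrak{g}_i^{e(i)}$ and lets distinct components generate commuting normal subgroups $N_i$. Finally it excludes $e(1)\geq 2$ by a transitivity argument with a homomorphism $\mathbf{G}(F)\to S_{e(1)}$.

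\emph{Your route.} You use Kolchin's theorem, together with absolute irreducibility and faithfulness, to kill $R_u(\mathbf{G}(F))$. You use perfectness to kill the central torus. So $\mathbf{G}(F)$ is semisimple outright, which shows in passing that the paper's abelian branch never occurs under the standing hypotheses. You then quote the structure theorem for semisimple groups, whose almost simple factors $G_i$ are normal and pairwise commuting by construction.

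\emph{What your route buys.} It is characteristic-free and avoids the Lie algebra entirely. In positive characteristic the Lie algebra of a semisimple group need not be a direct sum of simple ideals: $\mathfrak{sl}_p$ in characteristic $p$ contains the scalars as a nonzero center. Nor need ideals correspond to closed connected normal subgroups. So the paper's decomposition step tacitly needs a restriction on the characteristic, and this matters because $L$ is allowed to be a local field of positive characteristic. Your route also makes the permutation argument unnecessary. The paper's treatment of abelian normal subgroups, by contrast, does not use irreducibility at that step.

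\emph{Your flagged obstacle.} What you flag as the main obstacle is not one. You never need $\mathbf{G}(F)/\mathbf{Z}(\mathbf{G}(F))\cong\prod_i\bar G_i$. The images of $G_1$ and $G_2$ under the quotient map are automatically normal, as images of normal subgroups under a surjection. They commute elementwise, as images of commuting subgroups. They are nontrivial, because each $G_i$ is connected of positive dimension while $\mathbf{Z}(\mathbf{G}(F))$ is finite. That is all your word uses.
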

	
For the proof of the theorem, we need the following two observations, which we state here without proving them.
	
\begin{fact}\label{fct:ab_cse_shrt_id}
If there is a non-trivial abelian normal subgroup $A\trianglelefteq G$, then $G$ has the mixed identity $w(x)=[a^x,b]\in G\ast\gensubgrp{x}$ for $a,b\in A\setminus\mathbf{1}$.
\end{fact}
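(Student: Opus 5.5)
The plan is to check directly that $w(x)=[a^x,b]$ is trivial on all of $G$, and that it is a genuinely non-trivial element of $G\ast\gensubgrp{x}$.

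First, triviality of the word map. For any $g\in G$ we have $a^g\in A$, because $A$ is normal in $G$ and $a\in A$. Since $b\in A$ and $A$ is abelian, $a^g$ and $b$ commute. Hence $w(g)=[a^g,b]=1_G$ for every $g$, so $w(G)=\mathbf{1}$.

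Second, non-triviality of the word. Fix the convention $a^x=x^{-1}ax$ (the other convention is symmetric) and $[u,v]=u^{-1}v^{-1}uv$. Then
$$
w=x^{-1}a^{-1}x\,b^{-1}\,x^{-1}ax\,b .
$$
As written, the $x$-letters alternate in exponent, so the intermediate constants are, in order, $a^{-1}$, $b^{-1}$ and $a$. Each is $\neq 1_G$ because $a,b\neq 1$. So the word is reduced in the sense defined above. By the normal form theorem for free products, a reduced word containing at least one occurrence of $x$ is not an element of $G$. Thus $w\in (G\ast\gensubgrp{x})\setminus G$, and $w$ is a mixed identity. Its length, counted as the number of $x$-letters, is four, which is the length bound used in Theorem~\ref{thm:red_alm_smpl_cs_alg_grps}.

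There is no real obstacle. The one point needing care is non-triviality: the reducedness criterion only needs each intermediate constant between opposite-exponent occurrences of $x$ to be nontrivial. This is exactly where the hypothesis $a,b\in A\setminus\mathbf{1}$ enters. Triviality would only require $b$ to lie in $A$, but $a\neq1$ and $b\neq1$ are what keep the word from collapsing in the free product.
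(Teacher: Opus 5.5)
Your argument is correct: normality of $A$ puts $a^g$ in $A$, so $[a^g,b]=1_G$ for all $g\in G$ because $A$ is abelian, and the expansion $x^{-1}a^{-1}x\,b^{-1}\,x^{-1}ax\,b$ is in free-product normal form since $a,b\neq 1_G$, so $w\notin G$. The paper states this Fact without proof, and your verification is the routine check the authors leave to the reader.
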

	
\begin{fact}\label{fct:prod_cse_shrt_id}
If $G$ has two non-trivial normal subgroups $A,B\trianglelefteq G$ which centralize each other, then it satisfies the mixed identity $w(x)=[a^x,b]\in G\ast\gensubgrp{x}$, for non-trivial $a\in A \setminus\mathbf{1}$, $b \in B\setminus\mathbf{1}$, which has length four.
\end{fact}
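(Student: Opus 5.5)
The plan is to verify directly that the word $w(x)=[a^x,b]$ has the two properties required of a mixed identity. First, $w$ is a genuine non-constant element of $G\ast\gensubgrp{x}$. Second, it evaluates to $1_G$ at every $g\in G$. I use the convention $a^x=x^{-1}ax$ and $[u,v]=u^{-1}v^{-1}uv$; any other standard convention changes only the placement of the constants, not the argument.

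First I write out the word explicitly:
$$
w(x)=x^{-1}a^{-1}x\,b^{-1}\,x^{-1}ax\,b .
$$
In the notation of the introduction this means $l=4$, with $c_0=1_G$, $c_1=a^{-1}$, $c_2=b^{-1}$, $c_3=a$, $c_4=b$, and exponents $\varepsilon=(-1,1,-1,1)$. Consecutive exponents are always opposite, so every index in $\set{1,2,3}$ is critical. Since $a,b\neq 1_G$, all three intermediate constants $a^{-1},b^{-1},a$ are non-trivial. Hence $w$ is reduced in the sense defined above. By the normal form theorem for free products, $w$ is then a non-trivial element of $G\ast\gensubgrp{x}$ that does not lie in $G$, and it has length four in $x$.

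Second, I check that the word map is trivial. Let $g\in G$ be arbitrary. Because $A\trianglelefteq G$, the conjugate $a^g$ lies in $A$. Because $A$ and $B$ centralize each other, $a^g$ commutes with $b\in B$. Therefore $w(g)=[a^g,b]=1_G$, so $w(G)=\mathbf{1}$ and $w$ is a mixed identity of length four.

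There is no real obstacle here. The only point needing care is the reducedness and non-triviality of $w$, which rests entirely on $a\neq 1_G\neq b$; without this, $w$ could collapse to the trivial word. Finally, Fact~\ref{fct:ab_cse_shrt_id} is the special case $A=B$: an abelian normal subgroup centralizes itself.
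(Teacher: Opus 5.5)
Your proof is correct. Normality of $A$ puts $a^g$ in $A$, so $a^g$ commutes with $b\in B$ and $w(g)=[a^g,b]=1_G$; and $a\neq 1_G\neq b$ makes $x^{-1}a^{-1}x\,b^{-1}x^{-1}ax\,b$ a reduced, hence non-trivial, element of $(G\ast\langle x\rangle)\setminus G$ of length four. The paper states this Fact without proof as an immediate observation, and your direct verification is the natural argument for it.
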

	
\begin{proof}[Proof of Theorem~\ref{thm:red_alm_smpl_cs_alg_grps}]
We have that 
$$
\mathbf{G}(F)\subseteq\ker(\det\colon\GL(\mathbf{V}(F))\to F^\times)=\SL(\mathbf{V}(F)),
$$
since $\mathbf{G}(F)$ is perfect (and hence has no abelian quotients). Hence $\mathbf{Z}(\mathbf{G}(F))\subseteq\mathbf{Z}(\SL(\mathbf{V}(F)))$ by Schur's lemma, as $F$ is algebraically closed, and so the former is a finite cyclic group.
Write 
$$
\pi\colon\mathbf{G}(F)\twoheadrightarrow\mathbf{G}(F)/\mathbf{Z}(\mathbf{G}(F))
$$ 
for the natural morphism (an isogeny).
If $\mathbf{G}(F)$ had a non-trivial closed connected abelian normal subgroup $A$, then $A.\pi$ were a normal subgroup of the same type in $\mathbf{G}(F)/\mathbf{Z}(\mathbf{G}(F))$, and so the latter had a mixed identity of length four (by Fact~\ref{fct:ab_cse_shrt_id}). Hence we may assume that $\mathbf{G}(F)$ is a semisimple algebraic group. 
Write $\mathfrak{g}$ for the Lie algebra of $\mathbf{G}(F)$. Then $\mathfrak{g}=\bigoplus_{i\in I}{\mathfrak{g}_i^{e(i)}}$ for pairwise distinct simple Lie algebras $\mathfrak{g}_i$ and $e(i)>0$ ($i\in I$). For any $i\in I$, the subspace $\mathfrak{g}_i^{e(i)}$ is an ideal of $\mathfrak{g}$, so it generates a closed connected normal subgroup $N_i$ of $\mathbf{G}(F)$. All these $N_i$ are pairwise disjoint up to $\mathbf{Z}(\mathbf{G}(F))$. Assume there is $i\neq j$ contained in $I$; set $A\coloneqq N_i$ and $B\coloneqq N_j$. Then $[A,B]\subseteq A\cap B\leq\mathbf{Z}(\mathbf{G}(F))$.
		
Picking $a\in A\setminus\mathbf{Z}(\mathbf{G}(F))$ and $b\in B\setminus\mathbf{Z}(\mathbf{G}(F))$ (which is possible, since we are removing finitely many elements from infinite sets), the word $w(x)\coloneqq[(a.\pi)^x,b.\pi]\in\mathbf{G}(F)/\mathbf{Z}(\mathbf{G}(F))\ast\gensubgrp{x}$ is a mixed identity for $\mathbf{G}(F)/\mathbf{Z}(\mathbf{G}(F))$ of length four (by Fact~\ref{fct:prod_cse_shrt_id}). Hence we must have $\card{I}=1$. 
Then $\mathfrak{g}=\mathfrak{g}_1^{e(1)}$ and $\mathbf{G}(F)$ acts on this space. Clearly, $\mathbf{G}(F)$ must permute all copies of $\mathfrak{g}_1$ transitively, since otherwise, again there would be two non-trivial essentially disjoint connected normal subgroups. But this means there is a non-trivial homomorphism $\mathbf{G}(F)\to S_{e(1)}$, which is impossible if $e(1)\geq 2$, as $\mathbf{G}(F)$ is connected. Hence $e(1)=1$ and $\mathfrak{g}=\mathfrak{g}_1$ is simple. Thus $\mathbf{G}(F)$ is simple as a $K$-group.
\end{proof}
	
In accordance with Theorem~\ref{thm:red_alm_smpl_cs_alg_grps}, we assume now that $\mathbf{G}$ is quasisimple.
	
\subsection{Tomanov's approach}\label{subsec:tom_appr}
	
Let $L\geq K$ be a local field with absolute value $\norm{\bullet}$. Recall that this is  a map $\norm{\bullet}\colon L\to\mathbb{R}_{\geq0}$ such that $\norm{ab}=\norm{a}\norm{b}$ and $\norm{a+b}\leq\norm{a}+\norm{b}$ for all $a,b\in L$. It is called \emph{non-trivial} if there is $t\in L^\times$ such that $\norm{t}\neq 1$. Write $F=\overline{L}$ for the algebraic closure of $L$ and extend $\norm{\bullet}$ uniquely to it.
	
	
Let $\mathbf{G},\mathbf{V}$ and $K\leq L\leq \overline{L}$ and $\norm{\bullet}$ be as above. Let $e_1,\ldots,e_n\in\mathbf{V}(M)\subseteq\mathbf{V}(\overline{L})$ be a basis of decomposition for a torus $\mathbf{T}(\overline{L})$, where $M\geq L$ is a finite extension containing the splitting field of $\mathbf{T}$. Here $e_1$ is the highest weight vector and $e_n$ the lowest. Write $e_1^\ast,\ldots,e_n^\ast\in\mathbf{V}^\ast(M)$ for the dual basis.

\begin{definition}\label{def:sml_consts}
Set 
$$
\mathbf{\Lambda}(E)\coloneqq\set{c\in\GL(\mathbf{V}(E))}[(e_1.c^g).e_n^\ast=0\text{ for all }g\in\mathbf{G}(E)]
$$
to be the \emph{Tomanov-small (or T-small)} constants with respect to the field $E$, which lies between $M$ and $F=\overline{L}$.
\end{definition}
	
\begin{remark}
One observes that $\mathbf{\Lambda}(E)=\GL(\mathbf{V}(E))\cap\mathbf{\Lambda}(\overline{L})$, since $\mathbf{G}(E)$ is Zariski dense in $\mathbf{G}(\overline{L})$.
\end{remark}
	
\begin{remark}
Let $E$ be as in the previous definition. Note that $\mathbf{Z}(\GL(\mathbf{V}(E)))=E^\times 1_{\mathbf{V}(E)}\subseteq \mathbf{\Lambda}(E)$ and the latter consists of cosets of $\mathbf{Z}(\GL(\mathbf{V}(E)))$. However,  $\mathbf{G}(K)\not\subseteq \mathbf{\Lambda}(E)$,  as otherwise $\gensubsp{e_1,\ldots,e_{n-1}}_{\overline{L}}$ would be a non-trivial $\mathbf{G}(K)$-invariant subspace of  $\mathbf{V}(\overline{L})$, which is a contradiction, since $\mathbf{G}(K)$ acts (absolutely) irreducibly.
\end{remark}
	
Subsequently, we use the following approach of Tomanov~\cite{tomanov1985generalized}. Here we cite the first result on mixed identities for algebraic groups. It will be refined by Theorem~\ref{thm:sln_w_dim1} below.
	
\begin{theorem}[Tomanov~\cite{tomanov1985generalized}]\label{thm:tomanov}
There is no $\mathbf{\Lambda}(\overline{L})$-non-central identity for $\mathbf{G}(K)$ with all constants from $\GL(\mathbf{V}(\overline{L}))$.
\end{theorem}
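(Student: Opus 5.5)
The plan is to argue by contradiction using contraction dynamics on $\mathbf{V}(\overline{L})$, following Tomanov. Suppose $w$ is a $\mathbf{\Lambda}(\overline{L})$-non-central mixed identity for $\mathbf{G}(K)$. By Lemma~\ref{lem:red_one_var_cs}, after a substitution $x_i\mapsto g_{-i}xg_i$, I may assume $r=1$, so that
$$
w=c_0x^{\varepsilon(1)}c_1\cdots c_{l-1}x^{\varepsilon(l)}c_l .
$$
This substitution conjugates critical constants by elements of $\mathbf{G}(K)$, and $\mathbf{\Lambda}$ is stable under $\mathbf{G}$-conjugation by its very definition (it quantifies over all $g$). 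Hence non-centrality is preserved. By Remark~\ref{rmk:incr_fld} and Zariski density, $w$ then also vanishes identically on $\mathbf{G}(M)$ and on $\mathbf{G}(\overline{L})$. So I am free to plug in elements defined over $M$.

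\emph{Step 1: a proximal element.} I choose $t\in\mathbf{T}(M)$ whose weight on $e_1$ has strictly largest absolute value and whose weight on $e_n$ has strictly smallest absolute value, say $e_i.t=\lambda_ie_i$ with $\norm{\lambda_1}>\norm{\lambda_i}>\norm{\lambda_n}$ for $1<i<n$. This should come from a cocharacter pairing strictly positively with the highest-weight differences, evaluated at a uniformizer or at an element of absolute value $\neq1$. Then, entrywise as $N\to\infty$,
$$
\lambda_1^{-N}t^N\to P_1\coloneqq e_1^\ast\otimes e_1
\quad\text{and}\quad
\lambda_n^{N}t^{-N}\to P_n\coloneqq e_n^\ast\otimes e_n .
$$
I expect this step to be the main obstacle. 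I have to make sure the required strict inequalities are available over the local field, possibly after passing to a finite extension $M$ (which is harmless by the above), and that the highest and lowest weights are simple.

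\emph{Step 2: the limit of the normalised word.} I substitute $x=h^{-1}t^Nh$ with $h\in\mathbf{G}(M)$ to be chosen. Normalising by the appropriate powers of $\lambda_1$ and $\lambda_n$, the value $w(x)$ tends to
$$
c_0h^{-1}P_{\varepsilon(1)}hc_1h^{-1}\cdots hc_{l-1}h^{-1}P_{\varepsilon(l)}hc_l ,
$$
where $P_+=P_1$ and $P_-=P_n$. This is a rank-one operator multiplied by the scalar $\prod_{j=1}^{l-1}\bigl(e_{\varepsilon(j)}.hc_jh^{-1}\bigr).e_{\varepsilon(j+1)}^\ast$, with $e_+=e_1$ and $e_-=e_n$. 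For critical indices the factor is $(e_1.c_j^{h^{-1}}).e_n^\ast$ or its $(n,1)$-analogue, which is a non-zero polynomial in $h$ exactly because $c_j\notin\mathbf{\Lambda}$ (for the $(n,1)$ case I use the dual/lowest-weight version of the same condition). For $j\in J_+(w)$ the factor is $(e_1.c_j^{h^{-1}}).e_1^\ast$, and for $j\in J_-$-free same-sign negative blocks it is $(e_n.c_j^{h^{-1}}).e_n^\ast$. These need not be generically non-zero. So, before Step 2, I replace $x$ by $xa$ (for $\varepsilon=+$ blocks) with a generic $a\in\mathbf{G}(K)$. This changes $c_j$ to $ac_j$ in same-sign positions and to conjugates $ac_ja^{-1}$ in critical positions, so critical constants stay outside $\mathbf{\Lambda}$. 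If the relevant polynomial vanished for all $a$ and $h$, then the $\overline{L}$-span of $\mathbf{G}c_j$ would lie in a proper subspace of $\mathbf{M}_n$. This contradicts Burnside's theorem, since $\mathbf{G}$ acts absolutely irreducibly, so $\mathbf{G}$, and hence $\mathbf{G}c_j$, spans all of $\mathbf{M}_n$. Since there are finitely many non-zero polynomial conditions on $h$ (and $a$), and $\mathbf{G}(K)$ is Zariski dense and irreducible, I can choose $h$ and $a$ satisfying all of them simultaneously.

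\emph{Step 3: contradiction.} With these choices, $\mu_N^{-1}w(x_N)\to R$, where $\mu_N$ is the explicit normalising scalar and $R\neq0$ has rank one. But $w(x_N)=1$ for all $N$, so $\mu_N^{-1}1_{\mathbf{V}}$ converges to a non-zero rank-one matrix. This is impossible, since the limit of scalar matrices is scalar, and a non-zero scalar matrix has rank $n\geq2$; here $n\geq2$ because $\mathbf{G}$ is non-abelian and irreducible. Hence no such mixed identity exists.
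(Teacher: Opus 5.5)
Your argument is essentially the one the paper relies on: Tomanov's proof, as used in Subsection~\ref{subsec:tom_appr} and in the proof of Theorem~\ref{thm:sln_w_dim1}. It takes a proximal torus element and makes a generic substitution so that every index is $g$-appropriate. Your non-vanishing factors $(e_{\varepsilon(j)}.c_j').e_{\varepsilon(j+1)}^\ast$ are exactly condition~\eqref{eq:g_approp}. The normalised rank-one limits $\lambda_1^{-N}t^N\to P_1$ and $\lambda_n^{N}t^{-N}\to P_n$ are a linear-algebra version of the attraction Lemma~\ref{lem:attrctn}. Your Step~1, which you flag as the main obstacle, is precisely Lemma~\ref{lem:tomanov}, so you can simply cite it.

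Two points need repair.

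\emph{The reduction to $r=1$.} Your claim that non-centrality survives it is incomplete. In the one-variable word, every $j\in J_0(w)$ with $\varepsilon(j)=-\varepsilon(j+1)$ becomes critical. Its constant $g_{\varepsilon(j)i(j)}^{\varepsilon(j)}c_jg_{-\varepsilon(j+1)i(j+1)}^{\varepsilon(j+1)}$ is not a conjugate of $c_j$, and Lemma~\ref{lem:red_one_var_cs} only guarantees it is $\neq 1$. For instance, if $c_j$ is a non-trivial scalar and the two flanking elements cancel, the new critical constant is a scalar, hence lies in $\mathbf{\Lambda}$. Your factor then vanishes for all $a,h$, since $x\mapsto xa$ merely conjugates it. The fix is to choose the $g_{\pm i}$ generically together with $a$ and $h$. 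The two group elements flanking $c_j$ are independent variables, so your Burnside argument shows this factor is a non-zero polynomial. All finitely many conditions then hold on a non-empty Zariski open subset of the irreducible variety $\mathbf{G}^{2r+2}$.

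\emph{Critical indices with $(\varepsilon(j),\varepsilon(j+1))=(-1,+1)$.} Here you need $(e_n.c^h).e_1^\ast\not\equiv 0$ in $h$, which is not literally the definition of $\mathbf{\Lambda}$. It follows by replacing $h$ with $h\dot w_0$, where $\dot w_0\in N_{\mathbf{G}}(\mathbf{T})$ represents the longest Weyl element. This element swaps the one-dimensional weight spaces $\gensubsp{e_1}$ and $\gensubsp{e_n}$ and permutes the other weight spaces. Hence $(e_1.c^{h\dot w_0}).e_n^\ast=\kappa\,(e_n.c^{h}).e_1^\ast$ with $\kappa\neq 0$.

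A small inaccuracy: $x\mapsto xa$ turns $(-,-)$ constants into $c_ja^{-1}$ rather than $ac_j$, and leaves $(-,+)$ constants unchanged. Both effects are harmless for your argument. With these patches your proof is complete.
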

	
	
	
In the following, we construct a semisimple element, which allows us to carry out a ping-pong like argument.
	
\begin{lemma}[\cite{tomanov1985generalized}*{Proof of Theorem~1}]\label{lem:tomanov}
Assume that $\mathbf{T}\leq\GL(\mathbf{V})$ is a maximal torus in the representation of $\mathbf{G}$.
Then there exists a finite extension $M$ of $L$ such that there is a diagonal element $g\in \mathbf{T}(M)\leq\mathbf{G}(M)\leq\GL(\mathbf{V}(M))$ with the eigenvalues $\lambda_1,\ldots,\lambda_n\in L^\times\subseteq M^\times$ (corresponding to the eigenvectors $e_1,\ldots,e_n\in \mathbf{V}(M)$, which are  chosen as prior to Definition~\ref{def:sml_consts}) such that $\norm{\lambda_1}>\norm{\lambda_i}>\norm{\lambda_n}$ for $i\in\set{2,\ldots,n-1}$.
\end{lemma}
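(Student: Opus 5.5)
We will build $g$ as a cocharacter evaluated at a suitable element of $L^\times$, chosen to be strictly dominant. Over a finite extension $M\geq L$ the torus $\mathbf{T}$ splits. Fix a Borel subgroup containing $\mathbf{T}$, with positive system $\Phi^+$ and simple roots $\alpha_1,\ldots,\alpha_k$. We order the weight basis $e_1,\ldots,e_n$ so that $e_1$ spans the highest weight space and $e_n$ the lowest weight space. Since $\mathbf{G}$ acts absolutely irreducibly, these extremal weight spaces are one-dimensional. Every other weight $\mu$ of $\mathbf{V}$ has the form $\mu=\lambda-\sum_i m_i\alpha_i$ with $m_i\in\mathbb{Z}_{\geq0}$, not all zero. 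Symmetrically, $\mu=\lambda_{\min}+\sum_i m_i'\alpha_i$ with $m_i'\in\mathbb{Z}_{\geq0}$, not all zero, where $\lambda_{\min}$ is the lowest weight.

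Next we choose a cocharacter $\chi\colon\mathbf{G}_m\to\mathbf{T}$, defined over $M$, with $\langle\alpha_i,\chi\rangle>0$ for every simple root $\alpha_i$. Such a $\chi$ exists: take a suitable positive multiple of the sum of the fundamental coweights, which lies in the cocharacter lattice after scaling. Since $L$ is a local field, $\norm{\bullet}$ is non-trivial, so we can fix $t\in L^\times$ with $\norm{t}>1$. Set $g\coloneqq\chi(t)\in\mathbf{T}(M)$. Then $g$ acts on $e_j$ by $\lambda_j=t^{\langle\mu_j,\chi\rangle}\in L^\times$, where $\mu_j$ is the weight of $e_j$. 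So the eigenvalues do lie in $L^\times$, even though $g$ is only an $M$-point.

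For $2\leq j\leq n-1$ the weight $\mu_j$ differs from $\lambda$ and from $\lambda_{\min}$. This gives $\langle\lambda,\chi\rangle-\langle\mu_j,\chi\rangle=\sum_i m_i\langle\alpha_i,\chi\rangle>0$, and similarly $\langle\mu_j,\chi\rangle-\langle\lambda_{\min},\chi\rangle>0$. Since $\norm{t}>1$, we get $\norm{\lambda_1}>\norm{\lambda_j}>\norm{\lambda_n}$. One degenerate case needs care: $n=2$ or a representation with only two weights, where the middle range is empty. The claim is then vacuous apart from $\norm{\lambda_1}>\norm{\lambda_n}$, which holds because $\lambda\neq\lambda_{\min}$; this inequality in turn follows since $\mathbf{G}$ is non-abelian and acts non-trivially.

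The main obstacle is making sure the highest and lowest weight spaces are one-dimensional, which strict inequalities require. We justify this by absolute irreducibility together with the standard highest-weight theory of irreducible representations of reductive groups over $\overline{L}$. This holds in every characteristic, since an irreducible module is generated by a unique line of highest weight. The lowest weight is handled by applying the same argument to the dual, or equivalently via the longest Weyl element $w_0$. A secondary point is that $\mathbf{T}$, $\chi$ and the ordering depend only on the split data over $M$, so the basis can be taken compatible with the one fixed before Definition~\ref{def:sml_consts}.
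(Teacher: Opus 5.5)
Your argument is correct: the cocharacter $\chi$ with $\langle\alpha_i,\chi\rangle>0$ for every simple root, evaluated at $t\in L^\times$ with $\norm{t}>1$, gives eigenvalues $t^{\langle\mu_j,\chi\rangle}\in L^\times$, and the strict inequalities follow because the highest and lowest weights have multiplicity one and every other weight lies strictly between them in the dominance order. The paper has no proof of its own here and only points to Tomanov's proof of Theorem~1, which rests on this same construction, so you have essentially supplied the cited argument; as a small simplification, $\chi=\sum_{\beta\in\Phi^+}\beta^\vee$ already lies in $X_*(\mathbf{T})$ and pairs to $2$ with every simple root, so no rescaling is needed.
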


\begin{definition}\label{def:attr_rep_pts}
Let $g\in \mathbf{G}(M)$ be as in Lemma~\ref{lem:tomanov}. Write $\mathbf{P}(\mathbf{V}(M))$ resp.\ $\mathbf{P}(\mathbf{V}(\overline{L}))$ for the projective space obtained from $\mathbf{V}(M)$ resp.\ $\mathbf{V}(\overline{L})$ and equip them with the so-called \emph{analytic} topology which is induced by the extended absolute value $\norm{\bullet}\colon M\subseteq\overline{L}\to\mathbb{R}_{\geq 0}$. Set $A(g)\coloneqq\gensubsp{e_1}_{M}\in\mathbf{P}(\mathbf{V}(M))$; $B(g)\coloneqq\gensubsp{e_1}_{\overline{L}}\in\mathbf{P}(\mathbf{V}(\overline{L}))$, and $A'(g)=\overline{\gensubsp{e_2,\ldots,e_n}}_{M}\subseteq\mathbf{P}(\mathbf{V}(M))$; $B'(g)=\overline{\gensubsp{e_2,\ldots,e_n}}_{\overline{L}}\subseteq\mathbf{P}(\mathbf{V}(\overline{L}))$. Then similarly, $A(g^{-1})=\gensubsp{e_n}_{M}$; $B(g^{-1})=\gensubsp{e_n}_{\overline{L}}$, and $A'(g^{-1})=\overline{\gensubsp{e_1,\ldots,e_{n-1}}}_{M}$; $B'(g^{-1})=\overline{\gensubsp{e_1,\ldots,e_{n-1}}}_{\overline{L}}$.
\end{definition}	
	
The following important lemma we cite here without proof.
	
\begin{lemma}[Tits]\label{lem:attrctn}
Let $U\subseteq\mathbf{P}(\mathbf{V}(M))$ be any open neighborhood of $A(g)$ (in the analytic topology) and $C\subseteq\mathbf{P}(\mathbf{V}(M))\setminus A'(g)$ be  a compact set. Then there is $N\in\mathbb{N}$ such that $C.g^n\subseteq U$ for all $n\geq N$. 
\end{lemma}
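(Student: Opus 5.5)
We need to prove Tits' attraction lemma. Let $g$ be diagonal in the basis $e_1,\ldots,e_n$ with eigenvalues $\lambda_1,\ldots,\lambda_n$ and $\norm{\lambda_1}>\norm{\lambda_i}$ for $i\geq 2$. The plan is to write the action in coordinates and prove uniform convergence on $C$.

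First, a point $[v]\in\mathbf{P}(\mathbf{V}(M))$ lies outside $A'(g)=\overline{\gensubsp{e_2,\ldots,e_n}}$ exactly when its first coordinate $v.e_1^\ast$ is non-zero. On $\mathbf{P}(\mathbf{V}(M))\setminus A'(g)$ we therefore use the affine chart $[v]\mapsto (v_2/v_1,\ldots,v_n/v_1)\in M^{n-1}$, where $v_i=v.e_i^\ast$. This chart is a homeomorphism onto $M^{n-1}$ with its product topology, and it sends $A(g)$ to $0$.

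Second, since $C$ is compact and contained in this chart, its image is a compact subset of $M^{n-1}$. Hence it is bounded: there is $R>0$ with $\norm{v_i/v_1}\leq R$ for all $[v]\in C$ and all $i\geq 2$. Here we use that $M$ is a local field, being a finite extension of $L$. Its absolute value extends uniquely and continuously, and compact sets in $M^{n-1}$ are bounded because the coordinate functions are continuous.

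Third, we compute the action. Since $e_i.g=\lambda_i e_i$, we get $(v.g^n)_i=\lambda_i^n v_i$, so in the chart
$$
[v].g^n\mapsto\bigl((\lambda_2/\lambda_1)^n v_2/v_1,\ldots,(\lambda_n/\lambda_1)^n v_n/v_1\bigr).
$$
Set $q\coloneqq\max_{i\geq 2}\norm{\lambda_i}/\norm{\lambda_1}<1$. Then every coordinate of $[v].g^n$ has absolute value at most $q^nR$ for all $[v]\in C$. The open set $U$ contains $A(g)$, which corresponds to $0$ in the chart, so $U$ contains the image of some box $\set{x}[\norm{x_i}<\delta]$. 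Choose $N$ with $q^NR<\delta$; then $C.g^n\subseteq U$ for all $n\geq N$.

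The only delicate step is the second one, which requires $C$ to be compact as a subset of the chart rather than merely closed in projective space. This holds because $C\subseteq\mathbf{P}\setminus A'(g)$, the chart is an open embedding, and compactness is intrinsic. The remaining steps are routine, and the hypothesis of only $\norm{\lambda_1}>\norm{\lambda_i}$ for $i\geq2$ is all that is needed.
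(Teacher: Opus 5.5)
Your proof is correct and complete. The paper gives no proof of this lemma; it simply cites it from Tits. Your argument (the affine chart $\{[v] : v.e_1^\ast\neq 0\}$, boundedness of the compact set $C$ in that chart, and the uniform contraction factor $q=\max_{i\geq 2}\norm{\lambda_i}/\norm{\lambda_1}<1$) is the standard proof, and it uses exactly the hypothesis you identify.

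Two cosmetic remarks:
\begin{itemize}
\item Local compactness of $M$ is never used. Only continuity of the absolute value matters, since compactness of $C$ is a hypothesis.
\item You could say explicitly that $g^n$ maps the chart into itself, because the first coordinate becomes $\lambda_1^n v_1\neq 0$. This is what justifies reading $C.g^n\subseteq U$ off from the box in chart coordinates.
\end{itemize}
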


\section{Dimension of the word image}\label{chap:dim_wrd_img}

In this section we investigate the behavior of the dimension of the image of a given word map with constants for quasisimple linear algebraic groups over local fields. The following result is due to Gordeev, Kunyavski{\u\i}, and Plotkin \cite{gordeevkunyavskiiplotkin2018word}. It shows that the dimension of the image of a non-singular word map with constants is as large as possible in the generic case, and is an extension of a theorem of Borel \cite{borel1983free}.
	
\begin{theorem}[Corollary~5.3 in \cite{gordeevkunyavskiiplotkin2018word}]\label{thm:gordeev}
Recall that $F\geq K$ is algebraically closed. Let $w=w(x_1,\ldots, x_r, y_1,\ldots,y_s)\in\mathbf{F}_{r+s}$ be any non-trivial word in $r+s$ variables. If
$w(x_1,\ldots, x_r,1, . . . , 1)\neq 1_{\mathbf{F}_r}$, then there exists an non-empty Zariski open subset $U\subseteq\mathbf{G}(F)^s$ such that for every $u=(u_1,\ldots,u_s)\in U$ the word
map with constants 
$$
w(\bullet,u)\colon \mathbf{G}(F)^r\to\mathbf{G}(F)
$$ 
is dominant.
\end{theorem}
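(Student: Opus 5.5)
The plan is to deduce the statement from Borel's dominance theorem \cite{borel1983free} for ordinary words, by treating the constants as extra variables and passing to a generic fibre. Concretely, I would consider the morphism
$$
\Psi\colon \mathbf{G}(F)^r\times\mathbf{G}(F)^s\to\mathbf{G}(F)\times\mathbf{G}(F)^s,\qquad (x,u)\mapsto (w(x,u),u).
$$
The fibre of the image of $\Psi$ over $u\in\mathbf{G}(F)^s$ is exactly $w(\mathbf{G}(F)^r,u)\times\set{u}$. So the statement amounts to showing that $\Psi$ is dominant and then extracting a Zariski open set of good parameters $u$.

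\emph{Step 1 (reduction to dominance of $\Psi$).} Suppose $\Psi$ is dominant. Its image is constructible by Chevalley's theorem, as in Lemma~\ref{lem:im_wrd_mp_constbl}, so it contains a dense open subset $W\subseteq\mathbf{G}(F)^{1+s}$. The complement $Z$ of $W$ is closed of dimension strictly less than $(1+s)\dim\mathbf{G}$. By the fibre-dimension theorem applied to the projection $Z\to\mathbf{G}(F)^s$, the set of $u$ with $\dim Z_u=\dim\mathbf{G}$ lies in a proper closed subset. For $u$ in the complementary open set $U$, the fibre $W_u$ is a non-empty open subset of $\mathbf{G}(F)$ contained in $w(\mathbf{G}(F)^r,u)$. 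Hence $w(\bullet,u)$ is dominant for every $u\in U$.

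\emph{Step 2 (dominance of $\Psi$ via a tangent computation).} The hypothesis says that $w_0(x)\coloneqq w(x,1,\ldots,1)$ is a non-trivial element of $\mathbf{F}_r$. By Borel's theorem, the word map $w_0\colon\mathbf{G}(F)^r\to\mathbf{G}(F)$ is dominant; here $\mathbf{G}$ is quasisimple, hence connected semisimple. In characteristic zero, generic smoothness then yields a point $x_0$ at which $d_{x_0}w_0$ is surjective onto $T_{w_0(x_0)}\mathbf{G}$. At the point $(x_0,1)$, the differential of $\Psi$ is block triangular. Its $u$-block onto the $\mathbf{G}(F)^s$-factor is the identity. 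Its $x$-block onto the first factor is $d_{x_0}w_0$, because $u=1$ is fixed in that block. Hence $d_{(x_0,1)}\Psi$ is surjective, so $\Psi$ is dominant, and Step 1 applies.

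\emph{Main obstacle.} The obstacle is positive characteristic. There Borel's theorem still gives dominance of $w_0$, but $w_0$ may be inseparable, for instance $w_0=x^p$, so no point with surjective differential need exist. A naive fibre-dimension argument also goes the wrong way: upper semicontinuity only bounds generic fibres from above by the special fibre over $u=1$. My first attempt would be to factor $w_0$ through a power of Frobenius, $w_0=\phi\circ\mathrm{Fr}^k$ with $\phi$ separable and dominant, and to check that the same factorization persists for $w(\bullet,u)$ near $u=1$, so that the tangent argument applies to $\phi$. Failing that, I would bypass differentials and redo Borel's proof with constants. That proof restricts to a maximal torus and shows that the image contains a dense set of regular semisimple classes, and this can be carried out uniformly in a family of parameters $u$. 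Throughout, Remark~\ref{rmk:incr_fld} lets us work over $F$ without loss.
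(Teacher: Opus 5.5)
The paper gives no proof of this statement; it is quoted from Gordeev--Kunyavski{\u\i}--Plotkin, so your argument has to stand on its own. Your Step~1 is correct: the bad set $E=\{u : \mathbf{G}(F)\times\{u\}\subseteq Z\}$ is an intersection of closed sets, and $\mathbf{G}(F)\times E\subseteq Z$ forces $\dim E<s\dim\mathbf{G}$. The tangent computation of Step~2 is also correct in characteristic zero. The gap is positive characteristic, which the statement does not exclude: $F$ is an arbitrary algebraically closed field, and the paper's local fields include $\mathbb{F}_q((t))$. In that case you never prove that $\Psi$ is dominant. Your first proposed remedy already fails at $u=1$. Take your own example $w_0=x^p$. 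After translation, the differential at a regular semisimple $x$ acts on each root space $\mathfrak{g}_\alpha$ by the scalar $\sum_{i=0}^{p-1}\alpha(x)^{-i}\neq0$, and on the Lie algebra of the maximal torus by $p=0$. So it is non-zero but never surjective. Hence $w_0$ is neither separable nor of the form $\phi\circ\mathrm{Fr}^k$ with $k\geq1$, since such a map has zero differential. Your second remedy is only a sketch.

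The missing step needs no differentials, and your reason for discarding the fibre-dimension approach is the real mistake. Bounding fibre dimensions near $u=1$ from above by those at $u=1$ is exactly the direction you need, because small fibres force a large image. Concretely, $w_0$ is dominant by Borel's theorem, so there is $y\in w_0(\mathbf{G}(F)^r)$ with $\dim w_0^{-1}(y)=(r-1)\dim\mathbf{G}$. The fibre of $\Psi$ over $(y,1)$ is $w_0^{-1}(y)\times\{1\}$. On the other hand, every irreducible component of a non-empty fibre of $\Psi$ has dimension at least $(r+s)\dim\mathbf{G}-\dim\overline{\im\Psi}$. Hence $\dim\overline{\im\Psi}\geq(1+s)\dim\mathbf{G}$, so $\Psi$ is dominant in every characteristic, and your Step~1 completes the proof. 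Alternatively, by upper semicontinuity the local fibre dimension $z\mapsto\dim_z\Psi^{-1}(\Psi(z))$ is at most $(r-1)\dim\mathbf{G}$ on an open neighbourhood $O$ of $(x_0,1)$, for any $x_0\in w_0^{-1}(y)$. Since the fibre of $\Psi$ through $(x,u)$ is $w(\bullet,u)^{-1}(w(x,u))\times\{u\}$, the same inequality shows that $w(\bullet,u)$ is dominant for every $u$ in the open projection of $O$ to $\mathbf{G}(F)^s$.
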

	
In the following we derive lower bounds on the Krull dimension of the word image with constants for algebraic groups, which are also valid for singular words.

\begin{remark}\label{rmk:dim_inv_isog}
Note that any constructible set $T\subseteq\SL_n(F)$ has the same dimension as its image $\overline{T}\subseteq\PSL_n(F)$, since the natural map $\SL_n(F)\twoheadrightarrow\PSL_n(F)$ is an isogeny. Hence we can work with quasisimple linear algebraic groups (with finite center) rather than simple ones (with trivial center).
\end{remark}

\subsection{Refining Tomanov's approach}

Assume that $\mathbf{\Lambda}(\overline{L})\cap I(w)=\varnothing$. Then we can find a substitution $s\colon x_i\mapsto g_{-i}x g_i$ for $g_{\pm i}\in\mathbf{G}(M)$ such that in $$
w'(x)=w(s(x_1),\ldots,s(x_r))=c_0'x^{\varepsilon(1)}c_1'\cdots c_{l-1}'x^{\varepsilon(l)} c_l'
$$ 
we have
\begin{equation}\label{eq:g_approp}
A(g^{\varepsilon(j)}).c_j'\notin A'(g^{\varepsilon(j+1)})
\end{equation}
for all $j\in\set{1,\ldots,l-1}$, where $g\in\mathbf{G}(M)$ is the element from Lemma~\ref{lem:tomanov}. We say that such an index $j$ is \emph{$g$-appropriate} for $w'$ if Equation~\eqref{eq:g_approp} holds\label{txt:g-apropp}. Also the first constant $c_0'$ and the last one $c_l'$ are not of importance when we study the dimension of the word image.
Hence we may assume w.l.o.g.\ that $w=x^{\varepsilon(1)}c_1\cdots c_{l-1}x^{\varepsilon(l)}\in\GL_n(M)\ast\gensubgrp{x}$. Moreover, by replacing $x$ by $x^{-1}$ if necessary, we may assume that $\varepsilon(l)=+1$.
	
Addition and multiplication are continuous with respect to the absolute value $\norm{\bullet}$ on $\overline{L}$. Thus any algebraic set $V(f_1,\ldots,f_n)\subseteq\overline{L}^n$ (for polynomials $f_1,\ldots,f_n\in \overline{L}[x_1,\ldots,x_m])$ is closed in the analytic topology, since it can be written as 
$$
(0,\ldots,0).(f_1,\ldots,f_n)^{-1}
$$ 
and all the $f_j$ are continuous. Hence the analytic topology is finer than the Zariski topology. Recall Definition~\ref{def:dim_img}. Also, we shall make use of Remark~\ref{rmk:incr_fld}, i.e.\ we estimate $\dim(w(\mathbf{G}(M)))$ instead of $\dim(w(\mathbf{G}(K)))$ (which are indeed equal).
	
\subsection{The special linear case}
	
We now prove a bound on the dimension of the word image for special linear groups with the method of Tomanov~\cite{tomanov1985generalized} (which only works for fields $K$ as required above).
At first we characterize Tomanov-small elements in the special linear case:
	
\begin{lemma}\label{lem:sml_elts_sln}
Recall Definition~\ref{def:sml_consts}. It holds that $\mathbf{\Lambda}(E)=\mathbf{Z}(\GL_n(E))\cong E^\times$ (when $\mathbf{G}=\SL_n$).
\end{lemma}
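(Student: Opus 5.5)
We prove the two inclusions separately. For $\mathbf{G}=\SL_n$ acting on $\mathbf{V}=\mathbf{A}^n$, we take the torus $\mathbf{T}$ to be the diagonal torus and the basis $e_1,\ldots,e_n$ to be the standard basis. Then $(e_1.c^g).e_n^\ast$ is the $(1,n)$-entry of the matrix $g^{-1}cg$ in the row-vector convention of the paper (for the other convention, replace $g$ by $g^{-1}$; this does not matter, since $g$ runs through the whole group).

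\emph{Scalars are T-small.} If $c=\lambda 1$ is scalar, then $c^g=c$ and $(e_1.c).e_n^\ast=\lambda e_1.e_n^\ast=0$ because $n\geq 2$. Hence $\mathbf{Z}(\GL_n(E))\subseteq\mathbf{\Lambda}(E)$, as was already noted in the remark after Definition~\ref{def:sml_consts}.

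\emph{Only scalars are T-small.} Suppose $c\in\mathbf{\Lambda}(E)$. By the earlier remark, $\mathbf{\Lambda}(E)=\GL(\mathbf{V}(E))\cap\mathbf{\Lambda}(\overline{L})$, so we may work over an algebraically closed field and let $g$ range over all of $\SL_n(\overline{L})$.

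The defining condition says $e_1.(g^{-1}cg).e_n^\ast=0$ for all $g$. Rewrite this as
\[
(e_1.g^{-1}).c.\bigl(g.e_n^\ast\bigr)=0,
\]
that is, $u.c.\varphi=0$, where $u=e_1.g^{-1}$ and $\varphi=g.e_n^\ast$ is the functional $v\mapsto (v.g).e_n^\ast$. Then $u$ is the first row of $g^{-1}$ and $\varphi$ is the last column of $g$, so the pairing satisfies $u.\varphi=(g^{-1}g)_{1n}=0$.

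The key claim is that every pair $(u,\varphi)$ with $u\neq 0$, $\varphi\neq0$ and $\varphi(u)=0$ arises in this way. This is a transitivity statement for $\SL_n$ on incident (point, hyperplane) pairs. To construct $g$, complete $u$ to a basis $u=f_1,f_2,\ldots,f_n$ such that $f_2,\ldots,f_n$ span a complement containing $\ker\varphi\cap\cdots$. Concretely, choose a basis adapted to the flag $\langle u\rangle\subseteq\ker\varphi$: put $f_1=u$, extend to a basis $f_1,\ldots,f_{n-1}$ of $\ker\varphi$, and add $f_n$ with $\varphi(f_n)=1$. Take $g^{-1}$ to have rows $f_1,\ldots,f_n$. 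Then $g$ is the dual-basis matrix, and its last column is $\varphi$. Finally, rescale $f_{n-1}$ to force determinant $1$; this is possible when $n\geq 3$. For $n=2$, rescale $f_2$ and compensate by rescaling $\varphi$, which leaves the condition $u.c.\varphi=0$ unchanged because it is homogeneous.

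It follows that $u.c.\varphi=0$ whenever $\varphi(u)=0$. In other words, for every nonzero $u$, the vector $u.c$ lies in every hyperplane containing $u$, so $u.c\in\langle u\rangle$. Thus every nonzero vector is an eigenvector of $c$, and the standard argument (apply this to $u$, $v$ and $u+v$ for linearly independent $u,v$) shows that $c$ is scalar. If $c\in\GL_n(E)$, the scalar lies in $E^\times$, which gives the stated isomorphism with $E^\times$.

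The main obstacle is the transitivity step: verifying carefully that $(e_1.g^{-1},\,g.e_n^\ast)$ runs through all incident pairs up to scalars, including the determinant normalization for $n=2$. Everything else is routine linear algebra.
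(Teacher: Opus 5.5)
Your proof is correct, but it takes a different route from the paper.

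The paper argues directly with matrix entries. A non-central $c$ either already has a non-zero off-diagonal entry, or it is diagonal with two distinct eigenvalues $\lambda\neq\mu$. In the second case, conjugating the corresponding $2\times 2$ block by an elementary unipotent matrix produces the off-diagonal entry $\lambda-\mu\neq 0$. Conjugating further by a suitable element of $\SL_n(E)$ (a signed permutation matrix) moves this entry to position $(1,n)$, so $(e_1.c^g).e_n^\ast\neq 0$ for some $g$.

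You instead show that $(e_1.g^{-1},\,g.e_n^\ast)$ runs, up to scalars, over all incident pairs $(u,\varphi)$ with $u.\varphi=0$. T-smallness then becomes the statement that $u.c$ lies in every hyperplane through $u$, i.e.\ every vector is an eigenvector of $c$.

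\emph{What each approach buys.} The paper's computation is shorter and completely explicit. Your argument is coordinate-free, and it has several advantages:
\begin{itemize}
\item It shows the answer does not depend on which basis with $e_1.e_n^\ast=0$ is used.
\item It works verbatim over $E$ itself, so the detour through $\overline{L}$ is unnecessary: your determinant normalisation only rescales $f_{n-1}$, or $f_n$ when $n=2$, by an element of $E^\times$.
\item It brings out the incidence variety that reappears as the set $P$ in the proof of Theorem~\ref{thm:sln_w_dim1}.
\item It explains the general picture. For arbitrary $\mathbf{G}$, $c$ is T-small exactly when $u.c.\varphi=0$ on the $\mathbf{G}$-orbit of $(e_1,e_n^\ast)$. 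For $\SL_n$ this orbit is the whole incidence variety, while for other groups it is a proper subvariety, which is why non-scalar T-small elements (e.g.\ long-root elements in type $D_n$) can occur there.
\end{itemize}

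One editorial point: delete the abandoned half-sentence ``such that $f_2,\ldots,f_n$ span a complement containing $\ker\varphi\cap\cdots$''. The construction you give after ``Concretely'' is the correct one and suffices.
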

	
\begin{proof}
Clearly, $\mathbf{Z}(\GL_n(E))$ is contained in $\mathbf{\Lambda}(E)$. If $c\in\GL_n(E)$ is a non-central element, it has a non-zero matrix entry at position $(i,j)$ with $i\neq j$, or it has a direct summand of the form
$\left(\begin{smallmatrix}
\lambda & 0\\
0 & \mu
\end{smallmatrix}\right)$
that splits off, where $\lambda\neq\mu$ are elements of $E^\times$.
But $\left(\begin{smallmatrix}
1 & -1\\
0 & 1
\end{smallmatrix}\right)
\left(\begin{smallmatrix}
\lambda & 0\\
0 & \mu
\end{smallmatrix}\right)
\left(\begin{smallmatrix}
1 & 1\\
0 & 1
\end{smallmatrix}\right)=
\left(\begin{smallmatrix}
\lambda & \lambda-\mu\\
0 & \mu
\end{smallmatrix}\right)$
also has a non-zero off-diagonal entry. Now we may conjugate by an element of $\SL_n(E)$, so that $i=1$, $j=n$. Then $(e_1.c).e_n^\ast\neq 0$ and so $c$ is not T-small.
\end{proof}
	
Here is our first main result.
	
\begin{theorem}\label{thm:sln_w_dim1}
Let $w\in\GL_n(\overline{L})\ast\mathbf{F}_r$ be a word of length $\norm{w}=l\geq2$ such that $\mathbf{\Lambda}(\overline{L})\cap I(w)=\varnothing$. Then $\dim(w(\SL_n(K)^r))\geq 2(n-1)$.
\end{theorem}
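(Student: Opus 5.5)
We first reduce to a single variable and a normalised word. By Lemma~\ref{lem:red_one_var_cs} and the substitution argument following Remark~\ref{rmk:incr_fld}, replacing $w$ by $w'(x)=w(s(x_1),\ldots,s(x_r))$ only shrinks the image. So it suffices to treat $w=x^{\varepsilon(1)}c_1\cdots c_{l-1}x^{\varepsilon(l)}$ with $\varepsilon(l)=+1$, where every index $j\in\set{1,\ldots,l-1}$ is $g$-appropriate in the sense of \eqref{eq:g_approp}. Here $g\in\mathbf{T}(M)$ is the element of Lemma~\ref{lem:tomanov}. By Lemma~\ref{lem:sml_elts_sln}, T-small constants are exactly the scalars, so non-centrality is precisely what makes the appropriate conjugation possible. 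For non-critical indices (equal exponents) appropriateness is arranged as in Tomanov's argument. By Remark~\ref{rmk:incr_fld} we may estimate $\dim(w(\SL_n(M)))$.

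The key step is to produce a $2(n-1)$-dimensional family inside the image. Evaluate $w$ on $h^{-1}g^N h$, with $N$ large and $h$ ranging over a small analytic neighbourhood $\mathcal{U}$ of $1$ in $\SL_n(M)$. By Lemma~\ref{lem:attrctn} and appropriateness (which is an open condition, so it survives perturbation by $h$), ping-pong shows the following. The image of the line $A(g^{\varepsilon(1)}).h$ under $w(h^{-1}g^Nh)$ is pushed close to $A(g).h$, the attracting point of the last letter. Dually, the image of the repelling hyperplane is governed by $A'(g^{-1}).h$. Hence, for large $N$, the rank-one leading term of $w(h^{-1}g^Nh)$, rescaled by the dominant eigenvalue product, is approximately $v_h\otimes\phi_h$. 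Here $v_h$ determines the point $A(g).h\in\mathbf{P}^{n-1}$. The functional $\phi_h$ determines a point of the dual projective space, namely the hyperplane obtained from $A'(g^{\varepsilon(1)})$ via $h$ and the first letter. The map $h\mapsto(\text{attracting point},\text{repelling hyperplane})$ has image of dimension $2(n-1)$, since $\SL_n$ acts on pairs of a point and a hyperplane with an open orbit of dimension $2(n-1)$.

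We then pass from this analytic statement to Zariski dimension. Let $Z=\overline{w(\SL_n(M))}$. Consider the rational map $\Phi$ sending a matrix $A$ to the pair $(\text{image line},\text{kernel hyperplane})$ of the rank-one limit of its powers. Alternatively, use $\Phi(A)=(\langle e_1.A\rangle,\dots)$ restricted to the open set where it is defined. The ping-pong estimates show that $\Phi(Z)$ contains points arbitrarily close to every point of an analytic open subset of $\mathbf{P}^{n-1}\times(\mathbf{P}^{n-1})^\ast$ of full dimension. An analytically open set in a product of projective spaces is Zariski dense, since a proper subvariety has empty analytic interior. Therefore $\dim Z\geq 2(n-1)$.

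The main obstacle is making the map $\Phi$ genuinely algebraic and continuous near the relevant elements. A candidate is the map to the dominant eigenline and eigenhyperplane. This is not regular, so one must instead use a regular map that detects them. We will try the map $A\mapsto(e.A,\ A.e')$ for generic fixed vectors $e,e'$, projectivised: $e.A$ lies near the attracting point, and the row functional lies near the repelling hyperplane, uniformly for $h\in\mathcal{U}$. We must then check, via uniform contraction from Lemma~\ref{lem:attrctn} over the compact closure of $\mathcal{U}$, that the image of $\mathcal{U}$ under $\Phi\circ w$ contains an open set rather than merely approximating one. For this, fix $N$ and note that $h\mapsto\Phi(w(h^{-1}g^Nh))$ is a $C^0$-small perturbation of the submersion $h\mapsto(A(g).h,\ A'(g^{-1}).h)$. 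A degree or invariance-of-domain argument then yields an open image.
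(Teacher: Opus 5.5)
\textbf{The proposal has a genuine gap: the dimension count fails when $\varepsilon(1)=-\varepsilon(l)$.} Normalise $\varepsilon(l)=+1$ and $c_0=c_l=1$. Then the rescaled rank-one limit of $w(h^{-1}g^Nh)$ has image line $A(g).h=\langle e_1\rangle.h$ and kernel $A'(g^{\varepsilon(1)}).h$. If $\varepsilon(1)=-1$, then $A'(g^{-1})=\overline{\langle e_1,\ldots,e_{n-1}\rangle}$ contains $A(g)$. Up to conjugation by $h$, the limit is the nilpotent map $u\mapsto\mu\,(u.e_n^\ast)\,e_1$, so the (point, hyperplane) pair is incident.

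Incident pairs form the closed $\SL_n$-orbit, the flag variety of dimension $(n-1)+(n-2)=2n-3$, not the open orbit you invoke. No choice of $h$, of $N$, or of the proximal element $g$ leaves this orbit, because the attracting line of $g$ always lies in the repelling hyperplane of $g^{-1}$. So for such words your argument only yields $\dim\geq 2n-3$.

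This is the decisive case, not a marginal one. The sharp example $x^{-1}cx$ of Example~\ref{ex:conj_cls_transv_min_dim} has exactly this form. There the missing dimension is the scale of the rank-one nilpotent matrix $c^h-1$, which any projectivised limit map discards.

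Your worry about an open image is beside the point. Once the analytic closure of $\Phi(Z)$ contains a nonempty analytic open set, you already have Zariski density. Moreover, a $C^0$ degree or invariance-of-domain argument is not available over a non-archimedean $L$.

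\textbf{The missing idea is the one the paper uses.} Accept that the ping-pong limits fill only an irreducible hypersurface, and get the last dimension from an invariant plus irreducibility. The paper takes $\gamma(A)=(\overline{v.A},\overline{l.(A^{-1})^\ast})$ with $v.l\neq 0$.

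For every word, the limits of $\gamma(w(g^N))$ are pairs $(\overline{e}_{+1},\overline{e}_{-1}^\ast)$ with $e_{+1}.e_{-1}^\ast=0$. Letting $g$ range over conjugates, these fill the incidence hypersurface $P$, of dimension $2n-3$. On the other hand, $(v.A).(l.(A^{-1})^\ast)=v.l\neq0$ (Equation~\eqref{eq:w_pres_pring}), so all of $\gamma(w(\SL_n(M)))$ lies off $P$. Since $\overline{w(\SL_n(M))}$ is irreducible (Lemma~\ref{lem:im_wrd_mp_constbl}), the Zariski closure of its $\gamma$-image is an irreducible closed set strictly containing $P$. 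Hence it is all of $\mathbf{P}(\mathbf{V})\times\mathbf{P}(\mathbf{V}^\ast)$, which gives $2(n-1)$.

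Your map $\Phi(A)=(\overline{e.A},\overline{A.e'})$ can be repaired the same way. In the bad case its limit locus is Zariski dense in the incidence variety. For generic $e,e'$, some $A_0$ in the image satisfies $e.A_0^2.e'\neq 0$, so $\Phi(A_0)$ is not incident, and irreducibility finishes the argument. When $\varepsilon(1)=\varepsilon(l)$ your direct count is correct, but as written the proposal does not prove the theorem.
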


Note that the estimate above is sharp:

\begin{example}\label{ex:conj_cls_transv_min_dim}
The word $w=x^{-1}cx=c^x$ for a \emph{transvection} $c=1_{\mathbf{V}(K)}+l v\in\SL_n(K)$ (where $v\in \mathbf{V}(K)\setminus\mathbf{0}$ and $l \in \mathbf{V}^\ast(K)\setminus\mathbf{0}$ are such that $v.l =0$) has image of dimension $2(n-1)$. Indeed, we can choose $l \in \mathbf{V}^\ast(K)\setminus\mathbf{0}$ arbitrarily up to a linear factor (i.e.\ we choose $\overline{l }\in\mathbf{P}(\mathbf{V}^\ast(K))$), which produces $n-1$ dimensions. Then we may choose any $0_{\mathbf{V}(K)}\neq v\in\ker(l)\cong K^{n-1}$ which again contributes $n-1$ dimensions. Also note that $c\notin\mathbf{\Lambda}(\overline{L})$ by Lemma~\ref{lem:sml_elts_sln}.
\end{example}
	
	
The most important application of Example~\ref{ex:conj_cls_transv_min_dim} and Theorem~\ref{thm:sln_w_dim1} is the following:
	
\begin{corollary}
Let $w$ be as in Theorem~\ref{thm:sln_w_dim1} with $L=\overline{L}=\mathbb{C}$ and $K\in\set{\mathbb{R},\mathbb{C}}$. Then we have 
$$
\dim(w(\SL_n(\mathbb{R})^r)=\dim(w(\SL_n(\mathbb{C})^r)\geq d(\SL_n)=2(n-1).
$$
\end{corollary}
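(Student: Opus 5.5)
The statement to prove is the Corollary: for $w$ as in Theorem~\ref{thm:sln_w_dim1}, $\dim(w(\SL_n(\mathbb{R})^r))=\dim(w(\SL_n(\mathbb{C})^r))\geq d(\SL_n)=2(n-1)$. The plan is to combine Remark~\ref{rmk:incr_fld}, Theorem~\ref{thm:sln_w_dim1} and Example~\ref{ex:conj_cls_transv_min_dim}. Take $L=\overline{L}=\mathbb{C}$ with the usual absolute value, which is a local field, and let $F=\mathbb{C}$.

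\emph{Equality of the two dimensions.} $\SL_n$ is perfect, so $\SL_n(\mathbb{R})$ is Zariski dense in $\SL_n(\mathbb{C})$ (Borel, Corollary~18.3). Equation~\eqref{eq:wrd_im_dim} then gives
$$
\overline{w(\SL_n(\mathbb{R})^r)}^{\mathbb{C}}=\overline{w(\SL_n(\mathbb{C})^r)}^{\mathbb{C}}.
$$
By Definition~\ref{def:dim_img}, the two dimensions are therefore equal.

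\emph{Lower bound.} For $K\in\set{\mathbb{R},\mathbb{C}}$ the field $K$ is infinite and $K\leq L$. By hypothesis $w$ has length $l\geq 2$ and $\mathbf{\Lambda}(\mathbb{C})\cap I(w)=\varnothing$. Theorem~\ref{thm:sln_w_dim1} applies directly and gives $\dim(w(\SL_n(K)^r))\geq 2(n-1)$.

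\emph{The value $d(\SL_n)=2(n-1)$.} For the upper bound, the word $c^x$ from Example~\ref{ex:conj_cls_transv_min_dim} lies in $\SL_n(F)\ast\mathbf{F}_1\setminus\SL_n(F)$. Its image is the conjugacy class of a transvection, which has dimension $2(n-1)$. Hence $d(\SL_n)\leq 2(n-1)$.

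For the lower bound, let $w\in\SL_n(F)\ast\mathbf{F}_r\setminus\SL_n(F)$ be arbitrary and reduced. By Lemma~\ref{lem:sml_elts_sln}, $\mathbf{\Lambda}=\mathbf{Z}(\GL_n)$, so we must handle critical constants that are central. If $c_j$ is central scalar at a critical index $j$, then $x^{\varepsilon}c_jx^{-\varepsilon}=c_j$. Moving this scalar to the end of the word shortens it, and changing $w$ by a central factor does not change the dimension of its image. Iterating, we reach one of two cases.
\begin{enumerate}[(i)]
\item We obtain a word with no central critical constants and length $\geq 2$. Theorem~\ref{thm:sln_w_dim1} then applies.
\item We obtain a word of length $\leq 1$. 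Length $0$ is impossible, since $w$ was not a constant; this is the point that needs care, because the reduction could in principle collapse $w$ to a constant. If the length is $1$, the word is $c_0x^{\pm1}c_1$, whose image has dimension $n^2-1\geq 2(n-1)$.
\end{enumerate}

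The main obstacle is the $d(\SL_n)$ claim. The collapse in case (ii) cannot actually occur: deleting central critical constants is just cancellation in the free product modulo central scalars, so a word that is not a constant in $\SL_n(F)\ast\mathbf{F}_r$ cannot become one. One caveat remains. The minimum defining $d(\SL_n)$ is taken over $K$-points, so one should check, via Remark~\ref{rmk:incr_fld}, that the field over which the constants lie is handled consistently with Theorem~\ref{thm:sln_w_dim1}.
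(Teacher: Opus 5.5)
Your first steps match how the paper intends the corollary to follow:
\begin{itemize}
\item equality of the two dimensions, via Zariski density of $\SL_n(\mathbb{R})^r$ in $\SL_n(\mathbb{C})^r$ and Remark~\ref{rmk:incr_fld};
\item the bound $\geq 2(n-1)$, via Theorem~\ref{thm:sln_w_dim1};
\item the upper bound $d\leq 2(n-1)$, via Example~\ref{ex:conj_cls_transv_min_dim}.
\end{itemize}

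\textbf{The gap.} It lies in your lower bound for $d(\SL_n)$. There you take the definition literally: all words in $\SL_n(F)\ast\mathbf{F}_r\setminus\SL_n(F)$, central critical constants allowed. You then try to eliminate central critical constants. Your claim that this reduction cannot collapse a non-constant word to a constant is false. The rule $x^{\varepsilon}zx^{-\varepsilon}\mapsto z$ for central $z$ holds only after evaluation in $\SL_n(F)$. It does not hold in the free product, where $z$ does not commute with $x$.

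Concretely, for $n\geq 2$ let $\zeta$ be a primitive $n$-th root of unity and set $z=\zeta\,1_{\mathbf{V}(F)}\in\mathbf{Z}(\SL_n(F))$. Take $w=x^{-1}zxz^{-1}$.
\begin{itemize}
\item This word is reduced, and its normal form has syllable length four, so $w\notin\SL_n(F)$.
\item Yet $w(\SL_n(K))=\{1\}$, so its image has dimension $0$.
\end{itemize}
This is just Fact~\ref{fct:ab_cse_shrt_id} applied to the centre. By the paper's own remark after the definition of $d(\mathbf{G})$, it gives $d(\SL_n)=d(\SL_n,\SL_n,\mathbf{1})=0$. So your argument, as written, would prove a false statement.

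\textbf{The fix.} Read $d(\SL_n)$ in the corollary as the minimum over words with no T-small critical constants, i.e.\ $d(\SL_n,\GL_n,\mathbf{\Lambda})$, where $\mathbf{\Lambda}=\mathbf{Z}(\GL_n)$ by Lemma~\ref{lem:sml_elts_sln}. This is what ``$w$ as in Theorem~\ref{thm:sln_w_dim1}'' signals, and it is why the paper derives the corollary from that Theorem and the Example. With this reading no reduction is needed:
\begin{itemize}
\item A word of length one has image $c_0\SL_n(K)c_1$, whose closure has dimension $n^2-1\geq 2(n-1)$.
\item Words of length at least two are covered by Theorem~\ref{thm:sln_w_dim1}.
\item The critical constant of $c^x$ in Example~\ref{ex:conj_cls_transv_min_dim} is a non-central transvection. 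So that word is admissible and attains $2(n-1)$.
\end{itemize}
Your closing caveat about fields also disappears. Constants in $\SL_n(\mathbb{C})\subseteq\GL_n(\overline{L})$ are already allowed in Theorem~\ref{thm:sln_w_dim1}.
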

	
\begin{proof}[Proof of Theorem~\ref{thm:sln_w_dim1}]
By Remark~\ref{rmk:incr_fld}, we only need to estimate the dimension of the set $w(\SL_n(M)^r)\subseteq\GL_n(M)$.
Let $e_{+1}\in \mathbf{V}(M)\setminus\mathbf{0}$ and $e_{-1}^\ast\in \mathbf{V}^\ast(M)\setminus\mathbf{0}$ be arbitrary such that $e_{+1}.e_{-1}^\ast=0$. There exists a basis $e_{+1}=e_1,\ldots,e_{-1}=e_n$ of $\mathbf{V}(M)$ such that $e_{-1}^\ast=e_n^\ast$ for the dual basis (where $e_1^\ast,\ldots,e_n^\ast\in\mathbf{V}^\ast(M)$ is the dual basis, i.e.\ $e_i.e_j^\ast=\delta_{ij}$ for all $i,j$). Now set $g=\diag(\lambda_1,\ldots,\lambda_n)\in\SL(\mathbf{V}(M))=\SL_n(M)$ to be a semisimple element with eigenbasis $e_1,\ldots,e_n$. Moreover, we require $\norm{\lambda_1}>\norm{\lambda_2}>\cdots>\norm{\lambda_{n-1}}>\norm{\lambda_n}$. Then for $(g^{-1})^\ast$ we obtain that this is the semisimple element with eigenbasis $e_1^\ast,\ldots,e_n^\ast$ acting as $\diag(\lambda_1^{-1},\ldots,\lambda_n^{-1})$. 
		
Define the set of T-small constants as in Definition~\ref{def:sml_consts}. Recall that $\mathbf{\Lambda}(\overline{L})=\mathbf{Z}(\GL_n(\overline{L}))$ by Lemma~\ref{lem:sml_elts_sln}. Recall Definition~\ref{def:attr_rep_pts}. To simplify the situation, we perform a substitution as in Lemma~\ref{lem:red_one_var_cs}. When replacing the given word 
$$
w=c_0x_{i(1)}^{\varepsilon(1)}c_1\cdots c_{l-1}x_{i(l)}^{\varepsilon(l)}c_l\in\GL_n(M)\ast\mathbf{F}_r
$$ 
without T-small (i.e.\ central; see Lemma~\ref{lem:sml_elts_sln}) critical constants by the word 
$$
w'=c_0'x^{\varepsilon(1)}c_1'\cdots c_{l-1}'x^{\varepsilon(l)}c_l'\in\GL_n(M)\ast\gensubgrp{x}
$$ 
by a substitution $x_i\mapsto g_{-i}xg_i$ for a \emph{suitable} tuple $(g_{\pm i})_{i=1}^r\in\SL_n(M)^{2r}$ depending on $g$, we have to make sure that
\begin{equation}\label{eq:nr_1}
A(g^{\varepsilon(j)}).c_j'\notin A'(g^{\varepsilon(j+1)})			
\end{equation}
and
\begin{equation}\label{eq:nr_2}
A((g^{-\varepsilon(j)})^\ast).(c_j'^{-1})^\ast\notin A'((g^{-\varepsilon(j+1)})^\ast) 
\end{equation}
for all $j\in\set{1,\ldots,l-1}$. This means that $j$ is $g$-appropriate for $w$, and $j$ is $(g^{-1})^\ast$-appropriate for $(w^{-1})^\ast$. These two conditions arise since we want to play the game of attraction and repulsion in $\mathbf{P}(\mathbf{V}(M))$ and $\mathbf{P}(\mathbf{V}^\ast(M))$ simultaneously. Equation~\eqref{eq:nr_1} is necessary to play this game with the word 
$$
w'=c_0'x^{\varepsilon(1)}c_1'\cdots c_{l-1}'x^{\varepsilon(l)}c_l'\in\GL(\mathbf{V}(M))\ast\gensubgrp{x}
$$ 
and the element $g$, and Equation~\eqref{eq:nr_2} is needed to play it with the word 
$$
(w'^{-1})^\ast=(c_0'^{-1})^\ast x^{\varepsilon(1)}(c_1'^{-1})^\ast\cdots (c_{l-1}'^{-1})^\ast x^{\varepsilon(l)}(c_l'^{-1})^\ast\in\GL(\mathbf{V}^\ast(M))\ast\gensubgrp{x}
$$ 
and the element $(g^{-1})^\ast$.
Here, in $(w'^{-1})^\ast$ the \emph{transpose-inverse} automorphism is applied only to the constants $c_j'$, but not to the variable $x$. By assumption, we have that $c_j'=g_{\varepsilon(j)i(j)}^{\varepsilon(j)}c_j g_{-\varepsilon(j+1)i(j+1)}^{\varepsilon(j+1)}$ for $j\in\set{1,\ldots,l-1}$. For the sake of readability, set $h\coloneqq g_{\varepsilon(j)i(j)}$ and $l\coloneqq g_{-\varepsilon(j+1)i(j+1)}$. Again we distinguish the following two cases:
		
\vspace{3mm} 
		
\noindent\emph{Case~1: $j\in J_0(w)\cup J_+(w)$.} Then Equation~\eqref{eq:nr_1} means that 
\begin{equation}\label{eq:nr_3}
e_{\varepsilon(j)}.h^{\varepsilon(j)}c_j l^{\varepsilon(j+1)}.e_{\varepsilon(j+1)}^\ast\neq 0.
\end{equation}
Similarly, Equation~\eqref{eq:nr_2} translates to
\begin{align}\label{eq:nr_4}
& e_{-\varepsilon(j+1)}.(e_{-\varepsilon(j)}^\ast.((h^{\varepsilon(j)}c_j l^{\varepsilon(j+1)})^{-1})^\ast)\neq 0 &\nonumber \\
& \Leftrightarrow e_{-\varepsilon(j+1)}.(h^{\varepsilon(j)}c_j l^{\varepsilon(j+1)})^{-1}.e_{-\varepsilon(j)}^\ast\neq 0.&
\end{align}
Collect the pairs $(h,l)\in\SL_n(\overline{L})^2$, which satisfy Equation~\eqref{eq:nr_3} in the set $S_j^+\subseteq\SL_n(\overline{L})^2$; and the set of such pairs, that satisfy Equation~\eqref{eq:nr_4} in $S_j^-$.
Then, since $\SL_n(\overline{L})$ acts irreducibly on $\mathbf{V}(\overline{L})$, $S_j^+$ and $S_j^-$ are non-empty and Zariski open in $\SL_n(\overline{L})^2$. Indeed, if this were not the case, $\ker(e_{\varepsilon(j+1)}^\ast)$ resp.\ $\ker(e_{-\varepsilon(j)}^\ast)$ would be non-trivial invariant subspaces under the action of $\SL_n(\overline{L})$.
Hence $S_j\coloneqq S_j^+\cap S_j^-\subseteq\SL_n(\overline{L})^2$ is again non-empty and Zariski open.
		
\vspace{3mm}
		
\noindent\emph{Case~2: $j\in J_-(w)$.} Then we have $h=l$ and $\varepsilon(j+1)=-\varepsilon(j)$. Equation~\eqref{eq:nr_1} translates to
\begin{equation}\label{eq:nr_5}
e_{\varepsilon(j)}.h^{\varepsilon(j)}c_j h^{-\varepsilon(j)}.e_{-\varepsilon(j)}^\ast\neq 0
\end{equation}
and Equation~\eqref{eq:nr_2} gives the following obstruction:
\begin{equation}\label{eq:nr_6}
e_{\varepsilon(j)}.h^{\varepsilon(j)}c_j^{-1} h^{-\varepsilon(j)}.e_{-\varepsilon(j)}^\ast\neq 0.
\end{equation}
Collect the elements $h\in\SL_n(\overline{L})$, which satisfy Equation~\eqref{eq:nr_5}, in $S_j^+\subseteq\SL_n(\overline{L})$, and the $h$'s that fulfill Equation~\eqref{eq:nr_6} in $S_j^-\subseteq\SL_n(\overline{L})$. By the assumption that $c_j\notin\mathbf{\Lambda}(\overline{L})$ is not T-small (i.e.\ non-central in $\GL_n(\overline{L})$; see Lemma~\ref{lem:sml_elts_sln}), there is an $h\in\SL_n(\overline{L})$ which satisfies Equation~\eqref{eq:nr_5} (and similarly for Equation~\eqref{eq:nr_6}, since $c_j^{-1}$ is also not T-small, as it is non-central). Hence both equations have a solution in $\SL_n(\overline{L})$. Define $S_j\coloneqq S_j^+\cap S_j^-$ as above and note that it is non-empty and Zariski open. 

Now define the projection $\pi_j\colon \mathbf{G}(\overline{L})^{2r}\to \mathbf{G}(\overline{L})^2$ by 
$$
(g_i)_{i=\pm1}^{\pm r}\mapsto (g_{\varepsilon(j)i(j)},g_{-\varepsilon(j+1)i(j+1)}).
$$ 
for $j\in J_0(w)\cup J_+(w)$ and similarly the projection $\pi_j\colon \mathbf{G}(\overline{L})^{2r}\to \mathbf{G}(\overline{L})$ by
$$
(g_i)_{i=\pm1}^{\pm r}\mapsto g_{\varepsilon(j)i(j)}
$$
for $j\in J_-(w)$. Then $T\coloneqq\bigcap_{j=1}^{l-1}S_j.\pi_j^{-1}\subseteq\SL_n(\overline{L})^{2r}$ is Zariski open. Hence, since $\SL_n(M)^{2r}$ is Zariski dense in $\SL_n(\overline{L})^{2r}$, the set $T\cap\SL_n(M)^{2r}\subseteq\SL_n(\overline{L})^{2r}$ is also Zariski dense. In particular it is non-empty, as desired, since then Equations~\eqref{eq:nr_3}, \eqref{eq:nr_4}, \eqref{eq:nr_5}, and \eqref{eq:nr_6} can be fulfilled simultaneously by a tuple from $\SL_n(M)^{2r}$.
		
\vspace{3mm}
		
In the following we write $w$ for $w'$ and $c_j$ for $c_j'$ ($j\in\set{0,\ldots,l}$). Let $v\in\mathbf{V}(M)$ and $l\in\mathbf{V}^\ast(M)$ be arbitrary such that $v.l\neq 0$. Consider the map
\begin{gather*}
\alpha\colon\SL_n(\overline{L})\to\mathbf{P}(\mathbf{V}(\overline{L}))\times\mathbf{P}(\mathbf{V}^\ast(\overline{L}));\\
g\mapsto(\overline{v.w(g)},\overline{l.(w(g)^{-1})^\ast}).
\end{gather*}
This map is regular and we can write it as the composition of the regular maps
\begin{gather*}
\beta\colon\SL_n(\overline{L})\to\GL_n(\overline{L});\\
g\mapsto w(g)
\end{gather*}
and
\begin{gather*}
\gamma\colon\GL_n(\overline{L})\to\mathbf{P}(\mathbf{V}(\overline{L}))\times\mathbf{P}(\mathbf{V}^\ast(\overline{L}));\\
g\mapsto(\overline{v.g},\overline{l.(g^{-1})^\ast}).
\end{gather*}
Note that then also
\begin{equation}\label{eq:w_pres_pring}
v.w(g).(l.((w(g)^{-1})^\ast)=v.w(g).w(g)^{-1}.l=v.l\neq 0.
\end{equation}
		
W.l.o.g. assume that $c_0=c_l=1_{\mathbf{V}(M)}$. We need that $v\notin A'(g^{\varepsilon(1)})$ and $l\notin A'((g^{-\varepsilon(1)})^\ast)$ to start our game of attraction and repulsion. This means that $v.e_{\varepsilon(1)}^\ast\neq 0$ and $e_{-\varepsilon(1)}.l\neq 0$, which are two linear Zariski open conditions on $e_{\varepsilon(1)}^\ast$ and $e_{-\varepsilon(1)}$ (where $e_{\varepsilon(1)}^\ast$ is seen as an element of $\mathbf{V}^\ast(\overline{L})\supseteq\mathbf{V}^\ast(M)$ and $e_{-\varepsilon(1)}$ as an element of $\mathbf{V}(\overline{L})\supseteq \mathbf{V}(M)$).
		
Applying Lemma~\ref{lem:attrctn}, by plugging in $g^n$ into $\alpha$, for $n$ large enough, we land in any neighborhood of $(\overline{e}_{+1},\overline{e}_{-1}^\ast)$ in the analytic topology on $\mathbf{P}(\mathbf{V}(M))\times\mathbf{P}(\mathbf{V}^\ast(M))$. Hence the closure (Zariski or analytic) of $\SL_n(M).\alpha$ contains 
$$
\set{(\overline{e}_{+1},\overline{e}_{-1}^\ast)\in\mathbf{P}(\mathbf{V}(M))\times\mathbf{P}(\mathbf{V}^\ast(M))}[e_{+1}.e_{-1}^\ast=0],
$$
since the Zariski topology is coarser than the analytic one (see the introduction of this section).
Hence its Zariski closure is
\begin{gather*}
P\coloneqq\set{(\overline{e}_{+1},\overline{e}_{-1}^\ast)\in\mathbf{P}(\mathbf{V}(\overline{L}))\times\mathbf{P}(\mathbf{V}^\ast(\overline{L}))}[e_{+1}.e_{-1}^\ast=0]\\
\subseteq\mathbf{P}(\mathbf{V}(\overline{L}))\times\mathbf{P}(\mathbf{V}^\ast(\overline{L})).
\end{gather*}
This set has $n-1+n-2=2n-3$ dimensions. Now the algebraic set
$$
\overline{\SL_n(M).\beta}^{\overline{L}}=\overline{w(\SL_n(M))}^{\overline{L}}\subseteq\SL_n(\overline{L}),
$$ 
maps onto a constructible subset $C$ of $\mathbf{P}(\mathbf{V}(\overline{L}))\times\mathbf{P}(\mathbf{V}^\ast(\overline{L}))$ under the regular map $\gamma$ (by Chevalley's theorem). The Zariski closure $\overline{C}^{\overline{L}}$ of $C$ must contain $P$, since already the closure of the image of $w(\SL_n(M))$ under $\gamma$ contains $P$. 
But $(\overline{v}',\overline{l}')\coloneqq(\overline{v.w(1)},\overline{l.(w(1)^{-1})^\ast})\notin P$ by Equation~\eqref{eq:w_pres_pring}, since $v.l\neq 0$ by assumption.
Hence we must have that 
$$
P\sqcup\set{(\overline{v}',\overline{l}')}\subset\overline{C}^{\overline{L}}\subseteq\mathbf{P}(\mathbf{V}(M))\times\mathbf{P}(\mathbf{V}^\ast(M)).
$$
However, $P$, $\overline{C}^{\overline{L}}$ and $\mathbf{P}(\mathbf{V}(M))\times\mathbf{P}(\mathbf{V}^\ast(M))$ are Zariski closed and irreducible, so that we must have that
$\overline{C}^{\overline{L}}=\mathbf{P}(\mathbf{V}(\overline{L}))\times\mathbf{P}(\mathbf{V}^\ast(\overline{L}))$ and hence 
$$
\dim(w(\SL_n(M)))\geq\dim(C)=\dim(\overline{C}^{\overline{L}})=2(n-1).
$$
This completes the proof.
\end{proof}
	
\subsection{The symplectic case}
	
The following theorem gives a sharp bound on the dimension of the word image in the symplectic group (see Example~\ref{ex:sml_conj_cls_symp} below).
	
\begin{theorem}\label{thm:dim_bds-symp}
Let $w\in\GL_n(\overline{L})\ast\mathbf{F}_r$ and of length $l\geq2$. Assume that $\mathbf{\Lambda}(\overline{L})\cap I(w)=\varnothing$ (where $\mathbf{G}=\Sp_{2m}$). Then $\dim(w(\Sp_{2m}(K))^r)\geq n=2m$.
\end{theorem}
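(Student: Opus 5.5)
I will follow the proof of Theorem~\ref{thm:sln_w_dim1} closely, replacing the pair $(\mathbf{V},\mathbf{V}^\ast)$ by the single space $\mathbf{V}=\mathbf{A}^{2m}$ equipped with the symplectic form $\omega$. For $\Sp_{2m}$ the transpose-inverse of $g$ is conjugate to $g$ via the isomorphism $\mathbf{V}\to\mathbf{V}^\ast$, $v\mapsto\omega(v,\cdot)$. So the dual game carries no new information, and I will play attraction and repulsion only in $\mathbf{P}(\mathbf{V})$, which has dimension $2m-1$.

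\textbf{Step 1: choose the element $g$.} I take a symplectic basis $e_1,\ldots,e_{2m}$ with $e_1$ the highest and $e_{2m}$ the lowest weight vector. Then $\omega(e_1,e_{2m})\neq0$, so $e_1^\perp=\gensubsp{e_1,\ldots,e_{2m-1}}$. By Lemma~\ref{lem:tomanov} there is a torus element $g=\diag(\lambda_1,\ldots,\lambda_m,\lambda_m^{-1},\ldots,\lambda_1^{-1})$ with $\norm{\lambda_1}>\norm{\lambda_i}>\norm{\lambda_1^{-1}}$ for all middle eigenvalues. Since $\Sp_{2m}$ acts transitively on $\mathbf{P}(\mathbf{V})$, the attracting point $\overline{e}_1$ can be any point $\overline{u}$. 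The repelling hyperplane for $g^{-1}$ is then $u^\perp$.

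\textbf{Step 2: make every index $g$-appropriate.} As in the $\SL_n$ case, I substitute $x_i\mapsto g_{-i}xg_i$, reduce to one variable, and drop $c_0,c_l$. Each condition \eqref{eq:g_approp} is a non-empty Zariski open condition on the $g_{\pm i}$.
\begin{itemize}
\item For $j\in J_0\cup J_+$, non-emptiness follows from irreducibility.
\item For critical $j$, the condition reads $(e_{\pm}.c_j^{h}).e_{\mp}^\ast\neq0$ for some $h$. This is exactly the statement $c_j\notin\mathbf{\Lambda}(\overline{L})$ from Definition~\ref{def:sml_consts}. The case $\varepsilon(j)=-1$ is symmetric, using the Weyl element swapping $e_1$ and $e_{2m}$.
\end{itemize}
Density of $\Sp_{2m}(M)^{2r}$ then yields a common solution.

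\textbf{Step 3: dimension count.} Fix $v\in\mathbf{V}(M)$ and consider the regular map $\alpha\colon\Sp_{2m}(\overline{L})\to\mathbf{P}(\mathbf{V})$, $h\mapsto\overline{v.w(h)}$, which factors through $\beta\colon h\mapsto w(h)$. Choose $v\notin A'(g^{\varepsilon(1)})$, an open condition. Lemma~\ref{lem:attrctn}, applied along the appropriate indices, gives $\alpha(g^N)\to\overline{e}_1$ analytically. The attracting point $\overline{e}_1=\overline{u}$ can be moved over a Zariski dense set of $\mathbf{P}(\mathbf{V}(M))$ while the open conditions of Step 2 and on $v$ are preserved. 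Hence the Zariski closure of $\alpha(\Sp_{2m}(M))$ is all of $\mathbf{P}(\mathbf{V}(\overline{L}))$, of dimension $2m-1$.

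This only gives $\dim\geq 2m-1$, so one more dimension is needed. For that I would use the scalar $\omega(v.w(h),v')$ for a second vector $v'$. Equivalently, I consider
\[
\gamma\colon X\mapsto(\overline{v.X},\,\omega(v.X,v')/\omega(v,v'))
\]
or the pair $(\overline{v.X},\overline{v'.X})$ restricted to symplectic-type constraints. I expect that by choosing $v'$ appropriately the closure has dimension $2m$.

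Comparing with the $\SL_n$ argument is instructive. There the extra dimension came from the point $\gamma(w(1))\notin P$ together with irreducibility. Here I would mimic that: show the image closure contains a proper irreducible subvariety $P$ of dimension $2m-1$, obtained from the attraction limits, plus a point $\gamma(w(1))$ outside $P$. Irreducibility (Lemma~\ref{lem:im_wrd_mp_constbl}) then forces dimension $\geq 2m$.

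A concrete choice of the target is $\gamma(X)=(\overline{v.X},\overline{l.(X^{-1})^\ast})\in\mathbf{P}(\mathbf{V})\times\mathbf{P}(\mathbf{V}^\ast)$ with $v.l\neq0$. In the symplectic setting the attraction limits are the pairs $(\overline{e}_1,\overline{\omega(e_1,\cdot)})$, which form the graph of $\overline{u}\mapsto\overline{\omega(u,\cdot)}$ and vanish on the pairing. This is a closed irreducible $(2m-1)$-dimensional set $P$. Meanwhile $\gamma(w(1))$ has nonzero pairing, so it lies outside $P$. This gives $\dim\geq 2m$.

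\textbf{Main obstacle.} The hardest step is checking that the attraction limits really sweep out all of $P$. This requires the dual attraction for $(g^{-1})^\ast$ to converge to $\omega(e_1,\cdot)$, which is a multiple of $e_{2m}^\ast$ and matches $A((g^{-1})^\ast)$ as a line. It also requires the simultaneous appropriateness conditions \eqref{eq:nr_1}–\eqref{eq:nr_2}. For critical constants the second condition must be non-empty using only $c_j^{-1}\notin\mathbf{\Lambda}$, which needs to be deduced from $c_j\notin\mathbf{\Lambda}$ using the symplectic duality.
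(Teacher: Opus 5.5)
Your final ``concrete'' argument is correct, but it gets the extra dimension by a different route from the paper.

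\textbf{The paper's route.} The paper never projectivizes $v.w(h)$. It treats $v.w(h)$ as a point of the affine chart $\mathbf{V}\subset\mathbf{P}^n=\mathbf{V}\sqcup\mathbf{P}(\mathbf{V})$, so the attraction limits $\overline{e}_{+1}$ lie on the hyperplane at infinity. Every vector is $\omega$-isotropic and $\Sp_{2m}$ is transitive on $\mathbf{P}(\mathbf{V})$, so these limits fill all of $\mathbf{P}^{n-1}$. The finite point $v.w(1)$ together with irreducibility then forces the closure to be $\mathbf{P}^n$. This needs only one game in $\mathbf{P}(\mathbf{V})$, only the conditions \eqref{eq:nr_1}, and only $c_j\notin\mathbf{\Lambda}(\overline{L})$. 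It tacitly uses $v.w(g^N)\to\infty$, which holds because $g$ and $g^{-1}$ both expand their attracting line by $\norm{\lambda_1}>1$.

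\textbf{Your route.} You import the two-sided $\SL_n$ game from Theorem~\ref{thm:sln_w_dim1}. The limit set is the graph of the polarity $\overline{u}\mapsto\overline{\omega(u,\cdot)}$, a closed irreducible $(2m-1)$-dimensional subvariety of the incidence variety. Meanwhile $\gamma(w(h))$ always has pairing $v.l\neq 0$, so it lies off $P$.

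\textbf{Your two flagged obstacles both resolve.}
\begin{enumerate}
\item $\omega(u,\cdot)$ is an eigenvector of $(g^{-1})^\ast$ with eigenvalue $\lambda_1$, which is its dominant eigenvalue. So the dual limit is the one you claim.
\item $(e_1.c^h).e_{2m}^\ast$ is proportional to $\omega(e_1h^{-1},e_1h^{-1}c)$, and $\Sp_{2m}$ is transitive on nonzero vectors. Hence $c\in\mathbf{\Lambda}$ if and only if $\omega(u,uc)=0$ for all $u$. Substituting $u=yc$ and using $\omega(yc,y)=-\omega(y,yc)$ shows this set is stable under inversion. Thus $c_j^{-1}\notin\mathbf{\Lambda}$, and \eqref{eq:nr_6} is solvable. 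The reversed sign case $\varepsilon(j)=-1$ works the same way via $e_1^\ast\propto\omega(e_{2m},\cdot)$.
\end{enumerate}

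\textbf{Cleanup.} The first half of your Step~3, which gives only $2m-1$, and the tentative alternatives based on $\omega(v.X,v')$ are superfluous. The latter are also problematic, since $w(h)$ need not be symplectic when the constants lie in $\GL_n$. Drop them.

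\textbf{Comparison.} Your version stays purely projective and makes the analogy with the $\SL_n$ case transparent, at the cost of the dual appropriateness conditions. The paper's affine-cone trick is cheaper and transfers verbatim to the orthogonal and exceptional cases. There a $\mathbf{G}$-equivariant identification $\mathbf{V}\cong\mathbf{V}^\ast$ need not exist; for example, $V(\omega_1)$ for $E_6$ is not self-dual.
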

	
\begin{proof}
By Remark~\ref{rmk:incr_fld}, we only have to prove that $\dim(w(\Sp_{2m}(M)^r)\geq n=2m$.
By Witt's lemma, for every $e_{+1}=e_1\in \mathbf{V}(M)=M^n$ (which satisfies $f(e_{+1},e_{+1})=0$), there is a map $g\in\Sp_{2m}(M)$ of the form 
$$
g=\diag(\lambda_1,\lambda_2,\cdots,\lambda_{2m-1},\lambda_{2m})
$$ 
with $\lambda_i\lambda_{2m+1-i}=1$ stabilizing the hyperbolic planes $H_i=\gensubsp{e_i}\oplus\gensubsp{e_{2m+1-i}}$ ($i\in\set{1,\ldots,m}$), and such that $\norm{\lambda_1}>\norm{\lambda_2}>\cdots>\norm{\lambda_{2m}}$. Set $e_{-1}\coloneqq e_{2m}$. Fix an arbitrary starting point $v\in \mathbf{V}(M)\setminus\mathbf{0}$. Recall Definition~\ref{def:attr_rep_pts}. We need that $v\notin A'(g^{\varepsilon(1)})$ in order to start our game of attraction and repulsion. This means that $v\notin (e_{-\varepsilon(1)})^\perp$. This is a Zariski open condition on $e_{+1}$ (or $e_{-1}$), i.e.\ the set of all such $e_{-\varepsilon(1)}$ in $\mathbf{V}(\overline{L})$ is Zariski open. In the course of the trajectory, we must also have $A(g^{\varepsilon(j)}).c_j\notin A'(g^{\varepsilon(j+1)})$, i.e.\ $j$ is $g$-appropriate, for all $j\in\set{1,\ldots,l-1}$, but this can be achieved in the same way as in the proof of Theorem~\ref{thm:sln_w_dim1}, since the critical constants $c_j$ ($j\in J_-(w)$) are not Tomanov-small (here we use the torus which contains $g$). As there we end up with a word in just one variable. Moreover, we may assume that $c_0,c_l=1_{\mathbf{V}(M)}$, and $\varepsilon(l)=+1$, i.e.\ 
$$
w=x^{\varepsilon(1)}c_1\cdots c_{l-1}x^{\varepsilon(l)}.
$$ 
Let $U\ni\overline{e}_{+1}=\overline{e}_1$ be an arbitrary open neighborhood (in the analytic topology) in $\mathbf{V}(M)\sqcup\mathbf{P}(\mathbf{V}(M))=\mathbf{P}^n(M)$. Then the sequence $(v.w(g^n))_{n\in\mathbb{N}}$ eventually lies in $U$ by Lemma~\ref{lem:attrctn}. 
Hence this sequence converges to $\overline{e}_{+1}$ in the analytic topology. 
		
Thus the closure of the subset  $v.w(\Sp_{2m}(M))\subseteq\mathbf{P}^n(M)$ in the analytic topology contains $\mathbf{P}^{n-1}(M)=\mathbf{P}(\mathbf{V}(M))$. 
But, since $M$ is infinite, we have that in the inclusion $\mathbf{P}^{n-1}(M)\subseteq\mathbf{P}^{n-1}(\overline{L})$ the first set is Zariski dense in the second. Thus $v.w(\Sp_{2m}(M))$ is Zariski dense in $\mathbf{P}^{n-1}(\overline{L})$.
		
Now the proof proceeds as the one of Theorem~\ref{thm:sln_w_dim1}: The map
\begin{gather*}
\alpha\colon\Sp_{2m}(\overline{L})\to\mathbf{V}(\overline{L})\sqcup\mathbf{P}(\mathbf{V}(\overline{L}))=\mathbf{P}^n(\overline{L});\\
g\mapsto v.w(g)
\end{gather*}
is regular, and can be written as the composition of the regular maps
\begin{gather*}
\beta\colon\Sp_{2m}(\overline{L})\to\GL_{2m}(\overline{L});\\
g\mapsto w(g)
\end{gather*}
and
\begin{gather*}
\gamma\colon\GL_{2m}(\overline{L})\to\mathbf{P}^n(\overline{L});\\
g\mapsto v.g
\end{gather*}
i.e.\ we have $\alpha=\beta\gamma$. Note that the natural codomain of $\alpha$ (which is $\mathbf{V}(\overline{L})$) is extended by $\mathbf{P}(\mathbf{V}(\overline{L}))$. This is since we want to work in the bigger space $\mathbf{P}^n(\overline{L})$ rather than $\mathbf{P}^{n-1}(\overline{L})$.
		
The algebraic set
$$
\overline{\Sp_{2m}(M).\beta}^{\overline{L}}=\overline{w(\Sp_{2m}(M))}^{\overline{L}}\subseteq\GL_n(\overline{L})
$$ 
maps onto a constructible set $C\subseteq\mathbf{V}(\overline{L})\subset\mathbf{P}^n(\overline{L})$ under the regular map $\gamma$ (see Chevalley's theorem). Then the Zariski closure $\overline{C}^{\overline{L}}$ of $C$ must contain $P\coloneqq\mathbf{P}^{n-1}(\overline{L})\subset\mathbf{P}^n(\overline{L})$, since already the Zariski closure of the image of $w(\Sp_{2m}(M))$ under $\gamma$ contains $P$ (see above). But clearly, $v'\coloneqq v.w(1)\in \mathbf{V}(M)\subseteq\mathbf{V}(\overline{L})$ is also contained in $C\subseteq\overline{C}^{\overline{L}}$. As $\overline{C}^{\overline{L}}$, $P$, and $\mathbf{P}^n(\overline{L})$ are Zariski closed and irreducible and $P\sqcup\set{v'}\subset\overline{C}^{\overline{L}}\subseteq\mathbf{P}^n(\overline{L})$, we must have $\mathbf{P}^n(\overline{L})=\overline{C}^{\overline{L}}$ and hence
$\dim(w(\Sp_{2m}(M)))\geq\dim(C)=\dim(\overline{C}^{\overline{L}})=n$.
The proof is complete.
\end{proof}
	
\begin{example}\label{ex:sml_conj_cls_symp}
Let $w=x^{-1}cx=c^x$, where $c\coloneqq t_v\colon x\mapsto x+f(x,v)v$ is a \emph{symplectic transvection} ($v\in \mathbf{V}(K)\setminus\mathbf{0}$). Then $w\in\Sp_{2m}(K)\ast\gensubgrp{x}$ has image of dimension $n=2m$. Indeed, for any $u\in \mathbf{V}(K)\setminus\mathbf{0}$ the transvections $t_u$ and $t_v$ are conjugate by an element $g\in\Sp_{2m}(K)=\Sp(\mathbf{V}(K))$ with $u.g=v$ which exists by Witt's Lemma: $x.t_u^g=x+f(x.g^{-1},u)u.g=x+f(x,u.g)u.g=x+f(x,v)v=x.t_v$.	Moreover, $t_u=t_v$ if and only if $u=\pm v$, so that, when $T$ is the conjugacy class of a symplectic transvection, then $(\mathbf{V}(K)\setminus\mathbf{0})/\set{\pm1}\cong T$ is a $\PSp_{2m}(K)$-variety via the map $\set{\pm v}\mapsto t_v$. Hence $\dim(\im(w))=\dim(T)=\dim(\mathbf{V}(K))=n$. Moreover, $c\notin\Lambda(\overline{L})$, since $e_1.t_{e_{2m}}=e_1+e_{2m}\notin\ker(e_{2m}^\ast)$.
\end{example}
	
\subsection{The orthogonal case}
	
Assume that $Q\colon \mathbf{V}(K)\to K$ is a quadratic form on $\mathbf{V}(K)=K^n$ and let  $L$ be as before. Then, by Lemma~\ref{lem:tomanov}, there is a finite extension $M\geq L$ such that $Q$ has an isotropic vector $v\in\mathbf{V}(M)$, i.e.\ $Q(v)=0$.
	
\begin{theorem}
Let $w\in\GO(\mathbf{V}(\overline{L}),Q)\ast\mathbf{F}_r$ be a word of length $\norm{w}=l\geq2$. Assume that $\mathbf{\Lambda}(\overline{L})\cap I(w)=\varnothing$ (see Definition~\ref{def:sml_consts} for $\mathbf{G}=\SO_m$). Then 
$$
\dim(w(\SO(\mathbf{V}(K),Q)^r)\geq m-1.
$$
\end{theorem}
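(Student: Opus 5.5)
I will follow the symplectic argument (Theorem~\ref{thm:dim_bds-symp}) almost verbatim, with the quadric of isotropic lines replacing projective space. Here $m=\dim\mathbf{V}$, so the target bound $m-1$ is the dimension of the projective quadric $\mathcal{Q}\coloneqq\set{\overline{u}\in\mathbf{P}(\mathbf{V}(\overline{L}))}[Q(u)=0]$ plus one. By Remark~\ref{rmk:incr_fld} it suffices to bound $\dim(w(\SO(\mathbf{V}(M),Q)^r))$, where $M\geq L$ is a finite extension over which $Q$ becomes isotropic and which carries the element of Lemma~\ref{lem:tomanov}.

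\emph{Choice of $g$ and reduction to one variable.} For an isotropic vector $e_{+1}=e_1\in\mathbf{V}(M)$, Witt's lemma gives a hyperbolic partner $e_{-1}=e_m$ and a decomposition of $\mathbf{V}(M)$ into hyperbolic planes $H_i=\gensubsp{e_i}\oplus\gensubsp{e_{m+1-i}}$, plus possibly an anisotropic line when $m$ is odd. I take $g=\diag(\lambda_1,\ldots,\lambda_m)\in\SO(\mathbf{V}(M),Q)$ with $\lambda_i\lambda_{m+1-i}=1$, $\norm{\lambda_1}>\norm{\lambda_i}>\norm{\lambda_m}$ for $1<i<m$, and eigenvalue $1$ on the anisotropic line if present. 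Since the critical constants are not T-small, the Zariski-open and non-emptiness argument from the proof of Theorem~\ref{thm:sln_w_dim1} produces a substitution $x_i\mapsto g_{-i}xg_i$ with $g_{\pm i}\in\SO(\mathbf{V}(M),Q)$ making every $j$ $g$-appropriate. This yields $w=x^{\varepsilon(1)}c_1\cdots c_{l-1}x$ with $c_0=c_l=1$. I then fix a starting vector $v\in\mathbf{V}(M)$ with $v\notin e_{-\varepsilon(1)}^\perp$, which is a Zariski-open condition on the isotropic vector $e_{+1}$.

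\emph{Attraction.} By Lemma~\ref{lem:attrctn}, $v.w(g^n)\to\overline{e}_{+1}$ in $\mathbf{P}^m(M)=\mathbf{V}(M)\sqcup\mathbf{P}(\mathbf{V}(M))$. As $e_{+1}$ ranges over the isotropic vectors of $\mathbf{V}(M)$ avoiding the open condition, the analytic (hence Zariski) closure of $v.w(\SO(\mathbf{V}(M),Q))$ contains a Zariski-dense subset of the isotropic points of $\mathbf{P}(\mathbf{V}(M))$. For this I need that $M$-points of the quadric, which is smooth and has an $M$-point, are Zariski dense in $\mathcal{Q}$. Hence the closure contains $P\coloneqq\mathcal{Q}$, which has dimension $m-2$.

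\emph{Dimension count.} With $\gamma\colon c\mapsto v.c\in\mathbf{P}^m(\overline{L})$ and $C\coloneqq\gamma(\overline{w(\SO(M))})$, which is constructible by Chevalley's theorem, $\overline{C}$ contains $P\subset\mathbf{P}(\mathbf{V})$ together with the affine point $v.w(1)\in\mathbf{V}(\overline{L})$. Since $\overline{C}$ is irreducible by Lemma~\ref{lem:im_wrd_mp_constbl}, it strictly contains $P$, so $\dim\overline{C}\geq\dim P+1=m-1$. I expect the main obstacle to be this irreducibility step: $\mathcal{Q}$ is irreducible only for $m\geq3$, and for $m=2$ it consists of two points, and I would handle that case separately. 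The same caveat applies to the density of $M$-points and to applying Lemma~\ref{lem:attrctn}, which needs the strict gap $\norm{\lambda_1}>\norm{\lambda_2}$ available in the split torus over $M$.
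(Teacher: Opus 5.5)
Your proposal is correct and is essentially the paper's proof: the same ping-pong with an element $g$ attracting towards an isotropic line, Zariski density of the $M$-points of the projective quadric $P$, and the observation that the irreducible closure $\overline{C}$ contains $P$ together with the affine point $v.w(1)$. The differences are cosmetic. The paper splits off only one hyperbolic plane, so it does not need $Q$ fully split over $M$. It also takes $v$ isotropic and identifies $\overline{C}=V_{\overline{L}}(Q)\sqcup P$, whereas you allow any $v$ and use that a proper closed subset of the irreducible $\overline{C}$ has strictly smaller dimension; that argument does not need $P$ to be irreducible, so your caveat about $m=2$ is unnecessary.
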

	
\begin{proof}
Let $e_{+1}\in \mathbf{V}(M)\setminus\mathbf{0}$ be an isotropic vector. Then $V_M(Q)\setminus e_{+1}^\perp\neq\varnothing$, so there exists $e_{-1}\neq 0$ which is contained in this set, i.e.\ it is isotropic but $f(e_{+1},e_{-1})\neq 0$ and hence we can w.l.o.g.\ assume that $f(e_{+1},e_{-1})=1$. Then $\gensubsp{e_{+1}}\oplus\gensubsp{e_{-1}}$ is a hyperbolic plane that splits off. Hence there is a matrix $g=\diag(\lambda,m,\lambda^{-1})\in\SO(\mathbf{V}(M),Q)$, with an othogonal matrix $m$, such that $e_\varepsilon$ is an eigenvector with eigenvalue $\lambda^\varepsilon$ ($\varepsilon\in\set{\pm1}$). We can choose $\norm{\lambda}$ large enough so that $\overline{e}_{+1}$ becomes the unique point of attraction and $\overline{e}_{-1}$ the unique point of repulsion in $\mathbf{P}(\mathbf{V}(M))$.
As before (see the proof of Theorem~\ref{thm:sln_w_dim1}), we can assume that our word $w$ has only one variable $x$ and that every index $j\in\set{1,\ldots,l-1}$ is $g$-appropriate in $w=x^{\varepsilon(1)}c_1\cdots c_{l-1}x^{\varepsilon(l)}$ and $\varepsilon(l)=+1$. We can neglect the constants $c_0$ and $c_l$. Let $v\in V_M(Q)\setminus\mathbf{0}$ be the starting vector of our word $w$. Then we need that $v\notin A'(g^{\varepsilon(1)})$ (see Definition~\ref{def:attr_rep_pts} for this notation). This means that $v\not\perp e_{-\varepsilon(1)}$ which is a linear condition on $e_{+1}$ or $e_{-1}$. The game of attraction and repulsion gives that the sequence $(v.w(g^n))_{n\in\mathbb{N}}$, all of whose entries lie in $\mathbf{V}(M)$, converges in $\mathbf{P}^n(M)=\mathbf{V}(M)\sqcup\mathbf{P}(\mathbf{V}(M))$ to $\overline{e}_{+1}\in\overline{V_M(Q)\setminus\mathbf{0}}\subset\mathbf{P}(\mathbf{V}(M))$.
Hence, since $e_{+1}\in V_M(Q)$ was almost arbitrary (up to the linear condition above),  the analytic closure of $v.w(\SO(\mathbf{V}(M),Q))\subseteq\mathbf{P}^n(\overline{L})$ contains all points from $\overline{V_M(Q)\setminus\mathbf{0}}$. The same must hold for the Zariski closure as it is coarser than the analytic one. As $Q$ is a quadratic form, $\overline{V_M(Q)\setminus\mathbf{0}}$ is Zariski dense in $P\coloneqq\overline{V_{\overline{L}}(Q)\setminus\mathbf{0}}$. Alternatively, this holds, since $P=\overline{v}.\SO(\mathbf{V}(\overline{L}),Q)$ is a flag variety (of the flag $\mathbf{0}\lessdot\gensubsp{v}_{\overline{L}}<\mathbf{V}(\overline{L})$), and hence a complete variety, in which the $M$-points $\overline{v}.\SO(\mathbf{V}(M),Q)=\overline{V_M(Q)\setminus\mathbf{0}}$ are dense. Hence 
\begin{equation}\label{eq:dnse1}
P\subseteq\overline{v.w(\SO(\mathbf{V}(M),Q))}^{\overline{L}}\subseteq\mathbf{P}^n(\overline{L}).
\end{equation}
		
We proceed as in the proof of the previous theorem: Consider the regular map
\begin{gather*}
\alpha\colon\SO(\mathbf{V}(\overline{L}),Q)\to\mathbf{V}(\overline{L})\sqcup\mathbf{P}(\mathbf{V}(\overline{L}));\\
g\mapsto v.w(g)
\end{gather*}
which is a composition of the regular maps
\begin{gather*}
\beta\colon\SO(\mathbf{V}(\overline{L}),Q)\to\GO(\mathbf{V}(\overline{L}),Q);\\
g\mapsto w(g)
\end{gather*}
and
\begin{gather*}
\gamma\colon\GO(\mathbf{V}(\overline{L}),Q)\to\mathbf{V}(\overline{L})\sqcup\mathbf{P}(\mathbf{V}(M));\\
g\mapsto v.g.
\end{gather*}
Let $C$ be the image of $\overline{w(\SO(\mathbf{V}(M),Q))}^{\overline{L}}\subseteq\GO(\mathbf{V}(\overline{L}),Q)$ under $\gamma$. Then $C\subseteq V_{\overline{L}}(Q)\subset\mathbf{V}(\overline{L})$, since $Q(v.w(g))=Q(v)=0$ holds for all $g\in\SO(\mathbf{V}(\overline{L}),Q)$.
Moreover, the Zariski closure $\overline{C}^{\overline{L}}\subseteq\mathbf{P}^n(\overline{L})$ of $C$ contains $P$, since even the Zariski closure of the image of $w(\SO(\mathbf{V}(M),Q))$ under $\gamma$ contains $P$ (by Equation~\eqref{eq:dnse1}). Set $v'\coloneqq v.w(1)\in V_M(Q)$ and note that $P\sqcup\set{v'}\subset \overline{C}^{\overline{L}}\subseteq V_{\overline{L}}(Q)\sqcup P$. But since $P$, $\overline{C}^{\overline{L}}$, and $V_{\overline{L}}(Q)\sqcup P$ are all  Zariski closed and irreducible, we must have that $\overline{C}^{\overline{L}}=V_{\overline{L}}(Q)\sqcup P$.
		
Hence we obtain 
\begin{align*}
\dim(w(\SO(\mathbf{V}(M),Q))\geq\dim(C)=\dim(\overline{C}^{\overline{L}})\\
=\dim(V_{\overline{L}}(Q)\sqcup P)=\dim(V_{\overline{L}}(Q))=m-1.
\end{align*}
The proof is complete.
\end{proof}
	
\begin{example}
The \emph{orthogonal transvections} are of the form $t_v\colon x\mapsto x-f(x,v)v$ for $v\in \mathbf{V}(K)$ of norm $Q(v)=1$. We may assume w.l.o.g.\ that such a $v$ exists by scaling $Q$ appropriately. Then $t_v\in\GO(\mathbf{V}(K),Q)$ has determinant $-1$. Since $Q$ is a quadratic form, $V_K(Q-Q(v))$ is Zariski dense in $V_{\overline{L}}(Q-Q(v))$, so it has dimension $n-1$. Moreover, $\SO(\mathbf{V}(K),Q)$ acts transitively on all such vectors $v\in \mathbf{V}(K)$ of norm one by Witt's Lemma, and we have $x.t_v^g=x-f(x.g^{-1},v)v.g=x-f(x,v.g)v.g=x.t_{v.g}$. One verifies that $t_u=t_v$ if and only if $u=\pm v$. Set $c\coloneqq t_v$ and $w\coloneqq c^x=x^{-1}cx\in\GO(\mathbf{V}(K),Q)\ast\gensubgrp{x}$. Let $T=t_v^{\SO(\mathbf{V}(K),Q)}$ for an arbitrary $v\in V_K(Q-1)$. When $n=\dim(\mathbf{V}(K))$ is odd, $-T$ is the conjugacy class of $-t_v$ in $\SO(\mathbf{V}(K),Q)$, so that even $w'=(-c)^x\in\SO(\mathbf{V}(K),Q)\ast\gensubgrp{x}$. Generally, $\im(w)=\im(w')=V_K(Q-1)/\set{\pm1}\cong T$ is of dimension $n-1$, too. Here $c=t_v\notin\Lambda(\overline{L})$, since $e_1.t_{e_{2m}}=e_1-e_{2m}\notin\ker(e_{2m}^\ast)$.
\end{example}
	
\subsection{The exceptional groups of Lie type}
    
\begin{definition}
Subsequently, a vector $e_{+1}\in \mathbf{V}(M)\setminus\mathbf{0}$ is called \emph{appropriate} if there is an element $g\in\mathbf{G}(M)$ and a vector $e_{-1}\in \mathbf{V}(M)\setminus\mathbf{0}$ such that $\overline{e}_{+1}$ is the unique point of attraction and $\overline{e}_{-1}$ is the unique point of repulsion of $g$ acting on the projective space $\mathbf{P}(\mathbf{V}(M))$.
\end{definition}
	
Let $L$ be as above. Then there is a finite extension $M\geq L$ such that the action of $\mathbf{G}(M)$ on $\mathbf{V}(M)$ has an appropriate vector $e_{+1}\in\mathbf{V}(M)$ with respect to the element $g\in\mathbf{G}(M)$. This follows from Lemma~\ref{lem:tomanov}.
	
\begin{remark}
Clearly, every appropriate vector is isotropic. Here, a vector $v\in\mathbf{V}(M)\setminus\mathbf{0}$ is called \emph{isotropic} if there is $\lambda\in M^\times$ such that $\norm{\lambda}>1$, and a group element $g\in\mathbf{G}(M)$, so that $v.g=\lambda v$. 
\end{remark}
	
\begin{definition}
The minimal dimension of a homogeneous space of $\mathbf{G}$ is denoted by $\delta(\mathbf{G})$.
\end{definition}
	
The following theorem gives a bound on the dimension of the word image which coincides with the previous bounds for all classical groups except type $A_n$.

\begin{theorem}\label{thm:dim_except_grps}
Let $w\in\mathbf{G}(\overline{L})\ast\mathbf{F}_r$ be a word without Tomanov-small critical constants (i.e.\ $\mathbf{\Lambda}(\overline{L})\cap I(w)=\varnothing$). Then $\dim(w(\mathbf{G}(K)^r))\geq\delta(\mathbf{G})+1$.
\end{theorem}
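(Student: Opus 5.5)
We follow the template of the proofs for $\Sp_{2m}$ and $\SO$: play the attraction--repulsion game in $\mathbf{P}(\mathbf{V}(M))$ for a starting vector $v$, and use the affine cone $\mathbf{V}(\overline{L})\sqcup\mathbf{P}(\mathbf{V}(\overline{L}))=\mathbf{P}^n(\overline{L})$ to gain one extra dimension beyond the projective orbit of the highest weight line. By Remark~\ref{rmk:incr_fld} it suffices to bound $\dim(w(\mathbf{G}(M)^r))$ for a suitable finite extension $M\geq L$. First we fix $g\in\mathbf{T}(M)$ from Lemma~\ref{lem:tomanov}, with appropriate vector $e_{+1}=e_1$ (attracting) and $e_{-1}=e_n$ (repelling). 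As in the proof of Theorem~\ref{thm:sln_w_dim1}, we use the substitution of Lemma~\ref{lem:red_one_var_cs} with $x_i\mapsto g_{-i}xg_i$. The conditions that every $j\in\set{1,\ldots,l-1}$ be $g$-appropriate are nonempty Zariski open conditions on the tuple $(g_{\pm i})$. For $j\in J_0(w)\cup J_+(w)$ this follows from absolute irreducibility of $\mathbf{G}$, and for $j\in J_-(w)$ it follows from $c_j\notin\mathbf{\Lambda}(\overline{L})$, which by Definition~\ref{def:sml_consts} is exactly the existence of $h$ with $(e_1.c_j^h).e_n^\ast\neq0$ (and the analogous condition for $\varepsilon(j)=-1$, using that $e_n$ is the highest weight vector for the opposite Borel). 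Density of $\mathbf{G}(M)^{2r}$ then gives a good tuple. We then reduce to $w=x^{\varepsilon(1)}c_1\cdots c_{l-1}x$ with $c_0=c_l=1$.

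Next, we replace $g$ by its conjugates $g^k$, $k\in\mathbf{G}(M)$. The pair $(e_{+1}.k,e_{-1}.k)$ then runs over (a dense subset of) $\mathcal{O}\coloneqq\overline{e}_{+1}.\mathbf{G}$ paired with generic opposite points. The appropriateness conditions and the starting condition $v\notin A'(g^{\varepsilon(1)})$, i.e.\ $v.(e_{-\varepsilon(1)}^\ast.k)\neq0$, are Zariski open in $k$ and nonempty, hence they hold on a dense open set of $k$. For each such $k$, Lemma~\ref{lem:attrctn}, applied step by step along the word, shows that $v.w((g^k)^N)$ converges in $\mathbf{P}^n(M)$ to the point $\overline{e}_{+1}.k$ at infinity as $N\to\infty$. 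Hence the analytic closure, and therefore the Zariski closure, of $v.w(\mathbf{G}(M))$ contains a dense subset of the projective orbit $P\coloneqq\overline{\overline{e}_{+1}.\mathbf{G}(\overline{L})}$. This orbit is a flag variety $\mathbf{G}/\mathbf{P}_{\lambda}$ in which $M$-points are dense. Consequently $P$ is a homogeneous space of $\mathbf{G}$, of dimension at least $\delta(\mathbf{G})$.

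Finally, let $\gamma\colon g'\mapsto v.g'$ and $C\coloneqq\gamma(\overline{w(\mathbf{G}(M))}^{\overline{L}})$. This set is constructible, and its closure is irreducible by Lemma~\ref{lem:im_wrd_mp_constbl}. The closure $\overline{C}$ contains $P$, which lies in the hyperplane at infinity, and it also contains the affine point $v'=v.w(1)\notin P$. Since $\overline{C}$ is irreducible and strictly contains the irreducible closed set $P$, we get $\dim\overline{C}\geq\dim P+1\geq\delta(\mathbf{G})+1$, and $\dim w(\mathbf{G}(M))\geq\dim C$.

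The main obstacle is the middle step, namely getting the limit points to sweep out a whole $\mathbf{G}$-orbit rather than a single point. This requires that conjugating $g$ keeps all the open appropriateness conditions (which were arranged for one fixed $g$) simultaneously satisfiable for a dense set of conjugates. We plan to handle it by treating $k$ and $(g_{\pm i})$ jointly as one Zariski open condition on $\mathbf{G}(\overline{L})^{2r+1}$. The point is that the conditions for $g^k$ are the conditions for $g$ with $g_{\pm i}$ replaced by $kg_{\pm i}k^{-1}$-type translates, so nonemptiness is inherited from the fixed-$g$ case.
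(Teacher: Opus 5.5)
Your proposal is correct and follows essentially the same route as the paper. Both reduce to a one-variable word with every index $g$-appropriate, conjugate $g$ so that the attraction limits sweep out the orbit $\overline{e}_{+1}.\mathbf{G}(\overline{L})$ at infinity, and then use the affine point $v.w(1)$ together with irreducibility of $\overline{C}$ to gain one extra dimension. Your closing step (a proper closed subset of the irreducible $\overline{C}$ has strictly smaller dimension) slightly streamlines the paper's sandwich $P\sqcup\set{v'}\subset\overline{C}\subseteq$ cone. The conjugation obstacle you flag is also easier than you suggest: once $w'$ is fixed, each condition $A((g^k)^{\varepsilon(j)}).c_j'\notin A'((g^k)^{\varepsilon(j+1)})$ is Zariski open in $k$ and already holds at $k=1$, so no joint variation of $(k,(g_{\pm i}))$ is needed.
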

	
\begin{proof}		
Again, we can assume that $w$ has only one variable and every index $j\in\set{1,\ldots,l-1}$ is $g$-appropriate in 
$$
w=x^{\varepsilon(1)}c_1'\cdots c_{l-1}'x^{\varepsilon(l)}
$$ 
with $\varepsilon(l)=+1$; here the $c_j'$ depend on the different $g\in\mathbf{G}(K)$ which we plug into $w$, and $c_0',c_l'$ can be ignored.
Let $e_{+1}\in\mathbf{V}(M)$ be an appropriate vector which exists by Lemma~\ref{lem:tomanov} for an appropriate finite extension $M\geq K$. 
We want to estimate the dimension of the set
$$
\overline{w(\mathbf{G}(K))}^{\overline{L}}=\overline{w(\mathbf{G}(L))}^{\overline{L}}=\overline{w(\mathbf{G}(M))}^{\overline{L}}=\overline{w(\mathbf{G}(\overline{L}))}^{\overline{L}}\subseteq\GL(\mathbf{V}(\overline{L})),
$$
the equality following from Remark~\ref{rmk:incr_fld}.
Hence we may concentrate on the term $\overline{w(\mathbf{G}(M))}^{\overline{L}}$.
		
Note that the property of being appropriate or isotropic is $\mathbf{G}(M)$-invariant: when $e_{+1}$ is appropriate, then $e_{+1}.h$ is appropriate as well (for all $h\in\mathbf{G}(M)$). The same holds for the property of being isotropic. This is, since $e_{+1}.h.(g^h)=e_{+1}.gh=\lambda e_{+1}.h$ for suitable $\lambda\in M^\times$. 
		
In order to start the game of attraction and repulsion, we need a starting point $v\in\mathbf{V}(M)\setminus\mathbf{0}$ in the orbit 
$e_{+1}.\mathbf{G}(M)$ of $e_{+1}$ such that $v\notin A'(g^{\varepsilon(1)})\eqqcolon H\subset\mathbf{V}(M)$. If this does not hold, then we can replace $g$ by $g'\coloneqq g^h$ (and thus $e_{+1}$ and $e_{-1}$ by $e_{+1}'\coloneqq e_{+1}.h$ and $e_{-1}'\coloneqq e_{-1}.h$, respectively) for arbitrary $h\in\mathbf{G}(M)$ such that $v\notin H.h$ (which is a Zariski open condition in $h$, which is non-empty, since $\mathbf{G}(M)$ acts (absolutely) irreducibly).
		
As before, the sequence $(v.w(g^n))_{n\in\mathbb{N}}$ converges in $\mathbf{V}(M)\sqcup\mathbf{P}(\mathbf{V}(M))$ to the point $\overline{e}_{+1}\in\mathbf{P}(\mathbf{V}(M))$. Hence the analytic closure (and so the Zariski closure) of $v.w(\mathbf{G}(M))\subseteq \mathbf{V}(M)\sqcup\mathbf{P}(\mathbf{V}(M))$ contains all of $\overline{e}_{+1}.\mathbf{G}(M)=\overline{v}.\mathbf{G}(M)$. Hence
\begin{equation}\label{local field}
\overline{v}.\mathbf{G}(M)\subseteq\overline{v}.\mathbf{G}(\overline{L})\subseteq \overline{v.w(\mathbf{G}(M))}^{\overline{L}}\subseteq \mathbf{V}(\overline{L})\sqcup\mathbf{P}(\mathbf{V}(\overline{L})).
\end{equation}
		
Set
$$
C\coloneqq v.\overline{w(\mathbf{G}(M))} ^{\overline{L}}\subseteq\mathbf{V}(\overline{L})
$$ 
This is a constructible set, since it is the image of the Zariski closed set $\overline{w(\mathbf{G}(M))} ^{\overline{L}}$ under the regular map $\gamma\colon \mathbf{G}(\overline{L})\to\mathbf{V}(\overline{L}); g\mapsto v.g$. Then $C\subseteq v.\mathbf{G}(\overline{L})$. Its Zariski closure $\overline{C}^{\overline{L}}\subseteq\mathbf{V}(\overline{L})\sqcup\mathbf{P}(\mathbf{V}(\overline{L}))$ contains $\overline{v}.\mathbf{G}(\overline{L})$, as even $\overline{v}.\mathbf{G}(\overline{L})\subseteq\overline{v.w(\mathbf{G}(M))}^{\overline{L}}\subseteq\mathbf{V}(\overline{L})\sqcup\mathbf{P}(\mathbf{V}(\overline{L}))$ (by Equation~\eqref{local field} above). Let $v'\coloneqq v.w(1)\in v.w(\mathbf{G}(M))$. Then 
$$
\overline{v}.\mathbf{G}(\overline{L})\sqcup\set{v'}\subset \overline{C}^{\overline{L}}\subseteq \overline{v}.\mathbf{G}(\overline{L})\sqcup\bigcup\overline{v}.\mathbf{G}(\overline{L})\subseteq\mathbf{P}(\mathbf{V}(\overline{L}))\sqcup\mathbf{V}(\overline{L}) .
$$ 
However, $\overline{v}.\mathbf{G}(\overline{L})$ is Zariski closed and irreducible, since it is a flag variety and a regular image of the connected (i.e.\ irreducible) $K$-group $G$. The same holds for the sets $\overline{C}^{\overline{L}}$ and $\overline{v}.\mathbf{G}(\overline{L})\sqcup\bigcup\overline{v}.\mathbf{G}(\overline{L})$. Altogether, this implies that $\overline{C}^{\overline{L}}=\overline{v}.\mathbf{G}(\overline{L})\sqcup\bigcup\overline{v}.\mathbf{G}(\overline{L})$.
Thus we get
\begin{gather*}
\dim(w(\mathbf{G}(M)))\geq\dim(C)=\dim(\overline{C}^{\overline{L}})\\
=\dim(\overline{v}.\mathbf{G}(\overline{L})\sqcup\bigcup\overline{v}.\mathbf{G}(\overline{L}))=\dim(\overline{v}.\mathbf{G}(\overline{L}))+1\geq\delta(\mathbf{G})+1
\end{gather*}
This completes the proof.	
\end{proof}


For an algebraic group $G$, define
$$
\mu(G)\coloneqq\min_{g\notin\mathbf{Z}(G)}\dim(g^G).
$$
We then have the following table \cites{josephminimalorbit,josephminimalrealizations}:

\begin{table}[htb]
\centering
\begin{tabular}{|c|c|c|c|}
\hline
$\mathbf{G}$ & $\dim(\mathbf{G})$ & $\delta(\mathbf{G})$ & $\mu(\mathbf{G})$ \\
\hline
$A_n$ $(n\geq 1)$ & $n^2+2n$ & $n$ & $2n$ \\
$B_n$ $(n\geq 2)$ & $n(2n+1)$ & $2n-1$ & $2n$ \\
$C_n$ $(n\geq 2)$ & $n(2n+1)$ & $2n-1$ & $2n$ \\
$D_n$ $(n\geq 4)$ & $n(2n-1)$ & $2n-2$ & $4n-6$ \\
\hline
$G_2$ & $14$  & $5$  & $6$  \\
$F_4$ & $52$  & $15$ & $16$ \\
$E_6$ & $78$  & $16$ & $22$ \\
$E_7$ & $133$ & $27$ & $34$ \\
$E_8$ & $248$ & $57$ & $58$ \\
\hline
\end{tabular}
\caption{Minimal non-discrete homogeneous space imensions $\delta(G)$ and minimal non-central conjugacy class dimensions $\mu(G)$ for simple groups of irreducible Dynkin type}
\label{tab:dynkin_minimal}
\end{table}


This will show that our estimates are sharp possibly apart from the cases $D_n$, $E_6$ and $E_7$. We only have to check that one can realize the dimension by non-Tomanov-small elements.

Let $u_\theta$ be a long-root element, let $V(\theta)=\mathfrak g$
be the adjoint module, and let $r_v$ denote an orthogonal reflection.
For the usual representations realizing $\delta(\mathbf G)$ one has (see \cite{plotkinsemenovvavilov1998atlas}, \cite{springerveldkamp2000octonions}*{Chs.~2, 5 and
\S\S7.2--7.3}, \cites{springer2006groups,vavilovluzgarevpevzner2007e6,vavilovluzgarev2012e7}):
\[
\begin{array}{|c|c|c|c|c|}
\hline
\text{type and module} & c & \dim(c^{\mathbf G})
 & \text{Gordeev-small} & c\in\mathbf\Lambda_V \\ \hline
A_n,\ V(\omega_1) & u_\theta & 2n
 & \text{no} & \text{no}\\
B_n,\ V(\omega_1) & -r_v & 2n
 & \text{yes (semisimple)} & \text{no}\\
C_n,\ V(\omega_1) & u_\theta & 2n
 & \text{yes (unipotent)} & \text{no}\\
D_n,\ V(\omega_1) & u_\theta & 4n-6
 & \text{no} & \text{yes}\\\hline
G_2,\ V(\omega_1) & s,\ C_{\mathbf G}(s)\simeq A_2 & 6
 & \text{yes (semisimple)} & \text{no}\\
G_2,\ V(\theta) & u_\theta & 6
 & \text{yes (unipotent)} & \text{no}\\
F_4,\ V({\omega_4}) & s,\ C_{\mathbf G}(s)\simeq B_4 & 16
 & \text{yes (semisimple)} & \text{no}\\
F_4,\ V(\theta) & u_\theta & 16
 & \text{yes (unipotent)} & \text{no}\\
E_6,\ V(\omega_1)\text{ or }V({\omega_6}) & u_\theta & 22
 & \text{no} & \text{yes}\\
E_7,\ V({\omega_7}) & u_\theta & 34
 & \text{no} & \text{yes}\\
E_8,\ V(\theta) & u_\theta & 58
 & \text{no} & \text{no}\\\hline
\end{array}
\]
Here $s$ has order $3$ in type $G_2$ and order $2$ in type $F_4$.
In the short-root modules $V(\omega_1)$ for $G_2$ and
$V({\omega_4})$ for $F_4$, the alternative minimal representative
$u_\theta$ is T-small; the semisimple class displayed above supplies
the required non-T-small representative.

Indeed, in the T-small rows the relevant long-root operator has square
zero and the highest and lowest weights do not differ by a root, so
every extreme matrix coefficient vanishes.  In the adjoint module,
on the other hand,
\[
 e_\theta\cdot x_{-\theta}(t)
   =e_\theta+\cdots-t^2e_{-\theta},
\]
so a long-root element is not T-small. Consequently the conjugacy-class examples prove sharpness for the
displayed bounds in types $A_n,B_n,C_n,G_2,F_4,E_8$.

\end{document}